\newtheorem{theorem}{Theorem}[section]
\newtheorem{proposition}[theorem]{Proposition}
\newtheorem{lemma}[theorem]{Lemma}
\theoremstyle{definition}
\newtheorem{remark}[theorem]{Remark}
\theoremstyle{plain}
\newtheoremstyle{named}{}{}{\itshape}{}{\bfseries}{.}{.5em}{\thmnote{#3}}
\theoremstyle{named}
\newtheorem*{namedtheorem}{Theorem}
\newtheoremstyle{named}{}{}{\itshape}{}{\bfseries}{.}{.5em}{\thmnote{#3}}
\theoremstyle{named}
\newtheorem*{namedclaim}{Claim}
\newcommand\qqed{\hfill\qed}
\newcommand\qqqed{\tag*{\qed}}
\begin{document}

\noindent\subjclass[2020]{20M05, 20M18}

\keywords{Free inverse semigroup, monogenic, subsemigroup, finite presentation, idempotents}

\date{\today}

\title[Subsemigroups of monogenic free inverse semigroup]{On Finite Presentability of Subsemigroups of the Monogenic Free Inverse Semigroup}

\maketitle

\begin{center}
{\small
JUNG WON CHO%
\hspace{-.00em}\footnote{School of Mathematics and Statistics, University of St Andrews, St Andrews, Scotland,
UK. {\it Email: } {\tt jwc21@st-andrews.ac.uk}}
AND 
NIK RU\v{S}KUC
\hspace{-.20em}\footnote{School of Mathematics and Statistics, University of St Andrews, St Andrews, Scotland,
UK. {\it Email: }{\tt  nik.ruskuc@st-andrews.ac.uk. }{Supported by the Engineering and Physical Sciences Research Council [EP/V003224/1]}}
}
\end{center}

\begin{abstract}
The monogenic free inverse semigroup $FI_1$ is not finitely presented as a semigroup due to the classic result by Schein (1975). We extend this result and prove that a finitely generated subsemigroup of $FI_1$ is finitely presented if and only if it contains only finitely many idempotents. As a consequence, we derive that an inverse subsemigroup of $FI_1$ is finitely presented as a semigroup if and only if it is a finite semilattice.
\end{abstract}

%%%%%%%%%%%%%%%%%%%%%%%%%%%%%%%%%%%%%%%%%%%%%%%%%%%%%%%%%%%%%%%%%%%%%%%%%%%%%%%%%%%%%%%%%%%%%%%%%%%%%%%%%%%%%%%%%%%%%%%%%%%%%%%%%%%%%%%%%%%%%%%%%%%%%%%%
%%%INTRODUCTION%%%
\section{Introduction}
\label{sec:intro}

A semigroup $S$ is \emph{finitely presented} if it can be defined by a presentation $\langle A\mid R\rangle$ with both $A$ and $R$ finite.
Examples of finitely presented semigroups include:
all finite semigroups;
finitely generated free semigroups; finitely generated commutative semigroups \cite{redei};
all groups that are finitely presented as groups, including all finitely generated free groups; the bicyclic monoid \cite[Section 1.12]{CliffordPreston1}.
By way of contrast, Schein \cite{Schein1975} proved that free inverse semigroups are not finitely presented. The most surprising aspect of this, which is also the main step in the proof of the general result and the starting point for the present paper, is that this is true even for the monogenic free inverse semigroup:

\begin{namedtheorem}[Theorem 1 \protect{\normalfont(B.M. Schein \cite[Lemma 3]{Schein1975})}]
\label{thm:Schein}
The monogenic free inverse semigroup is not finitely presented as a semigroup.\qqed
\end{namedtheorem}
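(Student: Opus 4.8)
The plan is to reduce finite presentability to a statement about \emph{bounded-length} relations, and then defeat it with a single combinatorial invariant of words. Write $a=x$, $b=x^{-1}$, so that $FI_1$ is generated as a semigroup by $\{a,b\}$, and let $\pi\colon\{a,b\}^+\to FI_1$ be the canonical surjection with kernel congruence $\rho$. For each $N$ let $R_{\le N}=\{(u,v)\in\rho : |u|,|v|\le N\}$ and put $S_N=\{a,b\}^+/R_{\le N}^{\#}$, where $R_{\le N}^{\#}$ is the congruence generated by $R_{\le N}$. There is a canonical surjection $S_N\twoheadrightarrow FI_1$. Since finite presentability is independent of the choice of finite generating set, if $FI_1$ were finitely presented it would admit a finite presentation $\langle a,b\mid R_0\rangle$; taking $N$ to bound the lengths of words occurring in $R_0$ gives $R_0\subseteq R_{\le N}\subseteq\rho$, whence $\rho=R_0^{\#}\subseteq R_{\le N}^{\#}\subseteq\rho$ and so $S_N\cong FI_1$. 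Thus it suffices to prove that $S_N\neq FI_1$ for \emph{every} $N$, i.e.\ to exhibit $(u,v)\in\rho$ with $u\neq_{S_N}v$.

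I would set this up through the Munn description of $FI_1$: a word $w$ traces a nearest-neighbour walk on $\mathbb{Z}$ starting at $0$ ($a$ steps right, $b$ steps left), and $\pi(w)$ is determined exactly by the triple $(m_w,M_w,d_w)$, where $m_w\le 0\le M_w$ are the minimum and maximum of the walk and $d_w$ is its endpoint. For a word with $m_w<0<M_w$, define an invariant $\Phi(w)\in\{\mathrm{top},\mathrm{bot}\}$ recording whether the walk first attains its global maximum $M_w$ or its global minimum $m_w$. The candidate pair is
\[
u_n=a^n b^{2n} a^n,\qquad v_n=b^n a^{2n} b^n,
\]
both of which have walk-data $(m,M,d)=(-n,n,0)$, so $u_n=_{FI_1}v_n$, yet $\Phi(u_n)=\mathrm{top}$ while $\Phi(v_n)=\mathrm{bot}$.

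The heart of the argument is to show that $\Phi$ is preserved by a single application of a relation of length $\le N$, \emph{provided} $M_w-m_w>N$. Suppose $w=\alpha r\beta$ and $w'=\alpha s\beta$ with $(r,s)\in R_{\le N}$. Because $r=_{FI_1}s$, the two factors have equal local data $(m_r,M_r,d_r)=(m_s,M_s,d_s)$; hence the walks of $w$ and $w'$ agree along $\alpha$ and along $\beta$, and the altered middle portion sweeps out exactly the same window $I=[p+m_r,\,p+M_r]$ of width $\le N$ about the point $p=\pi(\alpha)$'s endpoint, visiting every integer of $I$ by connectivity. Since $N<M_w-m_w$, the window $I$ cannot contain both $M_w$ and $m_w$. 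I would then run a short case analysis on whether $I$ contains the global maximum, the global minimum, or neither: in each case the first-passage times to $M_w$ and to $m_w$ occur in parts of the walk that are either identical in $w$ and $w'$ or shifted by the common amount $|s|-|r|$, so their order is unchanged. This is the step I expect to be the main obstacle, as it requires care with the time reparametrisation caused by $|r|\neq|s|$ and with boundary cases where a first passage coincides with the entry into or exit from the window.

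Granting the invariance, the conclusion is immediate. Fix $N$ and take $n>N$, so $M-m=2n>N$. Any $S_N$-derivation from $u_n$ to $v_n$ would be a chain of short-relation applications, and every intermediate word is $\rho$-equivalent to $u_n$, hence lies in the class $(-n,n,0)$ with $m<0<M$ and $M-m>N$; thus $\Phi$ is defined throughout and constant along the chain, forcing $\Phi(u_n)=\Phi(v_n)$, a contradiction. Therefore $u_n\neq_{S_N}v_n$, so $S_N\neq FI_1$ for every $N$, and $FI_1$ is not finitely presented.
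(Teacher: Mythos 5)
Your proof is correct, but it takes a genuinely different route from Schein's argument (which the paper cites without reproducing, and adapts in \cref{sec:forward}). Schein's proof is semantic: he constructs the partial-transformation semigroups $\langle\alpha_n,\beta_n\rangle$ on $\{0,\dots,n\}$ (\cref{lem:schein}), which satisfy every relation of $FI_1$ lying in the first few $\mathscr{D}$-classes yet violate the idempotent-commutativity relations $\beta_n^{i}\alpha_n^{i}\alpha_n^{j}\beta_n^{j}=\alpha_n^{j}\beta_n^{j}\beta_n^{i}\alpha_n^{i}$ for $i+j>n$, $i,j\le n$; a hypothetical finite presentation would force these relations to transfer to $\langle\alpha_n,\beta_n\rangle$ for large $n$, a contradiction. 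You instead give a syntactic certificate of non-derivability: the first-passage invariant $\Phi$ (whether the walk first attains its global maximum or its global minimum), preserved by any single application of a relation of length at most $N$ whenever $M_w-m_w>N$. Note that your pair $u_n=a^nb^{2n}a^n$, $v_n=b^na^{2n}b^n$ is exactly the relation $x^{n}x^{-n}x^{-n}x^{n}=x^{-n}x^{n}x^{n}x^{-n}$ of $\mathfrak{R}$ (the case $i=j=n$), so both proofs defeat the same family of relations; only the witness differs. The step you flag as the main obstacle does close: the replaced factor sweeps a window of width at most $N$, hence missing at least one of the two global extremes; first passages to points outside the window occur either in the common prefix (at identical times) or in the common suffix (at times shifted by the constant $|s|-|r|$), while a first passage inside the window stays inside the contiguous middle block in both words, so the temporal order of the two first passages cannot flip. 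As for what each approach buys: Schein's construction yields concrete homomorphic targets that the present paper reuses --- \cref{prop:forward} needs the semigroups $\langle\alpha_m,\beta_m\rangle$ and the maps $\sigma_m$ to handle arbitrary finitely generated subsemigroups with infinitely many idempotents, where one has no control over which words over a given generating set represent the relevant idempotents, so a fixed word invariant like $\Phi$ on $\{a,b\}^{+}$ would be hard to deploy; your argument, on the other hand, is entirely self-contained and elementary, requiring no auxiliary semigroup and no verification of parts (i)--(iii) of \cref{lem:schein}, only bookkeeping about nearest-neighbour walks.
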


Motivated by Theorem~\ref{thm:Schein}, we investigate finite presentability of subsemigroups (not necessarily inverse) of the monogenic free inverse semigroup, and prove the following complete characterisation:

\begin{namedtheorem}[Main Theorem]
\label{thm:main}
A finitely generated subsemigroup of the monogenic free inverse semigroup is finitely presented as a semigroup if and only if it contains only finitely many idempotents.    
\end{namedtheorem}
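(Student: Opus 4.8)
The plan is to coordinatise $FI_1$ by its Munn trees over $\mathbb{Z}$: each nonidentity element is a triple $(\ell,m,r)$ with $\ell,r\ge 0$ and $-\ell\le m\le r$, recording the subtree $[-\ell,r]$ with initial root $0$ and terminal root $m$, and the product is $(\ell_1,m_1,r_1)(\ell_2,m_2,r_2)=(\max(\ell_1,\ell_2-m_1),\,m_1+m_2,\,\max(r_1,m_1+r_2))$. The idempotents are exactly the triples with $m=0$; they form a semilattice isomorphic to $(\mathbb{N}^2\setminus\{0\},\max)$, and $\sigma\colon (\ell,m,r)\mapsto m$ is a homomorphism onto the finitely generated subsemigroup $\langle \sigma(t_1),\dots,\sigma(t_k)\rangle$ of $(\mathbb{Z},+)$, where $T=\langle t_1,\dots,t_k\rangle$.

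The first step is a dichotomy reducing the theorem to a sign condition on $\sigma$: I claim $T$ has finitely many idempotents if and only if $\sigma(T)\subseteq\mathbb{Z}_{\ge 0}$ or $\sigma(T)\subseteq\mathbb{Z}_{\le 0}$. If, say, every $m_i=\sigma(t_i)\ge 0$, then any product whose image under $\sigma$ is $0$ uses only the generators with $m_i=0$, so the idempotents of $T$ form a finitely generated subsemilattice and hence a finite set; moreover a short induction on the product formula gives the uniform bounds $\ell\le L:=\max_i\ell_i$ and $r-m\le D:=\max_i(r_i-m_i)$, so each fibre $\sigma^{-1}(m)\cap T$ is finite. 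Conversely, if $T$ contains $x$ with $p:=\sigma(x)>0$ and $y$ with $q:=-\sigma(y)>0$, then $x^{qn}y^{pn}$ and $y^{pn}x^{qn}$ are idempotents of the form $(\ell_*,0,\rho n+c_1)$ and $(\rho' n+c_2,0,r_*)$ with $\rho,\rho'>0$, and multiplying one of each produces arbitrarily large rectangles $(\rho' k+c_2,\,0,\,\rho n+c_1)$ in the semilattice; in particular $T$ has infinitely many idempotents.

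For the direction ``finitely many idempotents $\Rightarrow$ finitely presented'' I would assume, without loss of generality, $\sigma(T)\subseteq\mathbb{N}$ (swapping $\ell$ and $r$ otherwise) and set $c=\max(L,D)$. The bounds on $\ell$ and $r-m$ make the twists in the product formula disappear on the ideal $I=\{t\in T:\sigma(t)\ge c\}$: writing $t=(m;\ell,v)$ with $v=r-m$, one checks $(m_1;\ell_1,v_1)(m_2;\ell_2,v_2)=(m_1+m_2;\,\ell_1,\,v_2)$, so $I$ embeds into $\mathbb{N}\times\Lambda\times P$ with $\Lambda\times P$ a finite rectangular band. Being a finitely generated subsemigroup of such a ``commutative-by-finite'' semigroup, $I$ is finitely presented---this can be obtained by a normal-form argument, combining R\'edei's theorem for the $\mathbb{N}$-part with the finite first-coordinate/last-coordinate data---and since $T\setminus I$ is finite, finite Rees index transfers finite presentability from $I$ to $T$ (the degenerate case $\sigma(T)=\{0\}$, where $T$ is a finite semilattice, is immediate).

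For the direction ``infinitely many idempotents $\Rightarrow$ not finitely presented'' I would adapt Schein's argument to the rectangle family above. The essential subtlety is that infinitude of idempotents is not by itself an obstruction---the bicyclic monoid is finitely presented and has an infinite chain of idempotents---so the proof must use that the idempotents span genuinely two-dimensional pieces of $\mathbb{N}^2$, whose defining ``independence'' relations cannot be finitely based. Concretely, given a hypothetical finite presentation $\langle A\mid R\rangle$ for $T$, with relator lengths bounded by some $N$, the aim is to construct a semigroup that satisfies every relation in $R$ yet in which two sufficiently far-apart idempotents of the family fail to multiply as they do in $T$, contradicting $T\cong\langle A\mid R\rangle$. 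Producing these finite models uniformly for an arbitrary two-signed generating set---translating the tree combinatorics of $FI_1$ into a family that defeats every bounded relation set---is the step I expect to be the main obstacle, and it is precisely where Schein's technique must be genuinely extended rather than quoted.
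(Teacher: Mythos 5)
Your dichotomy (the sign condition on $\sigma$) is exactly the paper's \cref{lem:useful}, and your ``finitely many idempotents $\Rightarrow$ finitely presented'' argument follows essentially the paper's route: your embedding of the ideal $I$ into $\mathbb{N}\times(\text{finite rectangular band})$ is the same structure as the paper's partition of the semigroup into congruence classes $S_{x,y}\leq N_{x,y+1}\cong\mathbb{N}$ in \cref{lem:nice}, and both arguments then rest on the same three inputs: subsemigroups of $\mathbb{N}$ are finitely generated and finitely presented, finite Rees index preserves finite presentability (\cref{prop:nik_large}), and---the step you only gesture at as ``a normal-form argument''---the theorem of Ara\'ujo et al.\ (\cref{prop:union}) that a semigroup partitioned into finitely many finitely presented congruence classes is finitely presented. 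That sketch could be completed (or replaced by the citation), so I regard this half as essentially correct.

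The genuine gap is the other direction. For ``infinitely many idempotents $\Rightarrow$ not finitely presented'' you state the correct goal---for each putative finite presentation, a model satisfying all its relations in which two far-apart incomparable idempotents fail to commute---and then explicitly defer the construction as ``the main obstacle''. This is the heart of the theorem, and it is missing. Moreover, your framing of the obstacle is off in a way that matters: no new finite models ``adapted to an arbitrary two-signed generating set'' are needed; the paper reuses Schein's partial-transformation semigroups $\langle\alpha_m,\beta_m\rangle$ unchanged. The mechanism is: (a) an ideal lemma (\cref{lem:ideal}) showing that any equality $u\phi=v\phi$ of words over $\{x,x^{-1}\}$ whose common value lies in $C_m=D_1\cup\dots\cup D_m$ is a consequence of only those relations of $\mathfrak{R}$ whose sides represent elements of $C_m$, and these relations are satisfied by $\alpha_m,\beta_m$ by \cref{lem:schein}\,(i)--(iii); (b) given a hypothetical finite presentation $\langle A\mid R\rangle$ of $S$ via $\theta$, each generator $a$ is sent to a chosen representative word $w_{a\theta}\in\{x,x^{-1}\}^+$; taking $n$ so large that all $a\theta$ and all $r\theta$ for $(r,s)\in R$ lie in $C_n$, and $m\geq 3n$, step (a) forces every relation of $R$ to hold in $\langle\alpha_m,\beta_m\rangle$ under the composite map, i.e.\ $\ker\theta$ is contained in the kernel of the composite; (c) your rectangle family supplies \emph{incomparable} idempotents $e\in C_n\cap S$ and $f\in D_m\cap S$ (\cref{rm:fig}), whose images reduce to Schein's non-commuting pair $\beta^a\alpha^a\alpha^d\beta^d\neq\alpha^d\beta^d\beta^a\alpha^a$ by \cref{lem:schein}\,(iv), since $a+d>m$ with $a,d\leq m$; but $ef=fe$ in $S$, a contradiction. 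So arbitrariness of the generating set is absorbed by pushing generators through chosen words in $\{x,x^{-1}\}^+$, not by building new models; the genuinely new ingredients beyond Schein are the ideal lemma and the placement of incomparable idempotents at prescribed levels $C_n$ and $D_m$. Your bicyclic-monoid remark correctly identifies that incomparability, not mere infinitude, is what must be exploited---that is precisely what step (c) does---but identifying the target is not the same as reaching it.
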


As an immediate consequence we also prove:

\begin{namedtheorem}[Corollary]
\label{cor:inv_fp}
Let $S$ be a finitely generated inverse subsemigroup of the monogenic free inverse semigroup. Then $S$ is finitely presented as a semigroup if and only if $S$ is a finite semilattice.
\end{namedtheorem}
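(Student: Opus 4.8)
The plan is to prove the two implications separately, leaning on the Main Theorem for the substantive one. The ($\Leftarrow$) direction is immediate: a finite semilattice is a finite semigroup, and every finite semigroup is finitely presented, as recorded in the introduction. (Equivalently, a finite semilattice is finitely generated and contains only finitely many idempotents, so the Main Theorem applies directly.) For the ($\Rightarrow$) direction, suppose $S$ is finitely presented. Since $S$ is a finitely generated subsemigroup of $FI_1$, the Main Theorem yields that $S$ contains only finitely many idempotents, and it then remains to establish the purely structural statement: an inverse subsemigroup of $FI_1$ with only finitely many idempotents is a finite semilattice.

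The heart of the matter is the following claim about $FI_1$ itself: if $a \in FI_1$ is not idempotent, then the idempotents $a^n(a^n)^{-1}$, for $n \ge 1$, are pairwise distinct. I would prove this in the Munn (birooted tree) representation, in which an element of $FI_1$ is encoded by a finite subinterval of $\mathbb{Z}$ together with a terminal offset $e$, an element being idempotent precisely when $e = 0$. Replacing $a$ by $a^{-1}$ if necessary, we may assume $e > 0$; a short induction using the product rule then shows that the interval spanned by $a^n$ extends strictly further to the right as $n$ grows, so that the idempotents $a^n(a^n)^{-1}$ record strictly increasing intervals and are therefore all distinct.

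Granting the claim, the structural statement follows at once. If $S$ were an inverse subsemigroup of $FI_1$ with finitely many idempotents but containing some non-idempotent element $a$, then applying the claim inside $S$ would produce infinitely many idempotents of $S$, a contradiction. Hence every element of $S$ is idempotent, so $S$ is a subsemigroup of the semilattice $E(FI_1)$, that is, a commutative band---a semilattice; and $S$ is finite because it coincides with its finite set of idempotents. Thus $S$ is a finite semilattice, as required.

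I expect the claim of the second paragraph to be the only genuine obstacle, and it is where the specific geometry of $FI_1$ must be used rather than general inverse-semigroup theory. Aperiodicity of $FI_1$ alone would merely force the $\mathcal{H}$-classes of $S$ to be trivial, bounding $|S|$ by the square of the number of idempotents; and one can easily imagine finite aperiodic inverse semigroups possessing non-idempotents, such as the five-element Brandt semigroup $B_2$. Such semigroups simply fail to embed into $FI_1$, because $FI_1$ is $E$-unitary and its non-idempotent elements have unboundedly growing powers. Pinning down the ``strictly growing extent'' computation in whatever normal form the paper has fixed is the one content-bearing step; everything else is formal.
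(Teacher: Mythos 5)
Your proposal is correct and takes essentially the same route as the paper: the paper's one-line proof observes that a non-idempotent $a\in S$ forces $S$ to contain the infinitely many distinct idempotents $a^{-n}a^n$ ($n\in\mathbb{N}$), so the Main Theorem rules out finite presentability --- which is exactly your key claim, applied in contrapositive form. The additional structural remarks you spell out (a subsemigroup of $E(FI_1)$ is a semilattice, and $S$ is finite because it equals its finite set of idempotents) are left implicit in the paper but are correct and match its intent.
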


The two results are in a somewhat stark contrast with the behaviour of one-generator free objects in some other varieties.
For example, the non-trivial subgroups of the cyclic free group $\mathbb{Z}$ are all isomorphic to $\mathbb{Z}$, while subsemigroups of the monogenic free semigroup $\mathbb{N}$ are all finitely generated \cite{natural}, and hence finitely presented (\cite{redei}, \cite[Chapter 9]{CliffordPreston2}). This behaviour is also very different from finite presentability of inverse subsemigroups \emph{as inverse semigroups}: Oliveira and Silva proved in \cite{Oliveira2005} that every finitely generated inverse subsemigroup of the monogenic free inverse semigroup is finitely presented as an inverse semigroup; see also \cite{CR:paper1}.
Finally, and this time not surprisingly, the behaviour of subsemigroups of the monogenic free inverse semigroup is different from those of a free inverse semigroup of rank $>1$. In \cite[Example 6.1]{Oliveira2005} the authors exhibit an example of a finitely generated subsemigroup in the free inverse semigroup of rank $3$ which has no finite inverse semigroup presentation. Also in this direction, Reilly \cite{reilly72,reilly73} established a necessary and sufficient condition for an inverse subsemigroup of a free inverse semigroup given by a set of generators to be free over that set of generators.

There are further examples in literature of finiteness properties with respect to which (monogenic) free inverse semigroups show a radically different behaviour from their group- and `plain' semigroup counterparts.
For example, Cutting and Solomon \cite{CS:Rcf01} proved that the a free inverse semigroup does not admit a regular set of normal forms with respect to any generating set. Gray and Steinberg \cite{GS:Fim21} showed that free inverse semigroups do not satisfy the finiteness condition $\textup{FP}_2$. In both cases the brunt of the work is to deal with the monogenic case.

From a broader perspective, all the above  results, as well as the result that is proved here, can be viewed as 
confirming 
that the presentation theory of inverse semigroups is substantively different in nature from the corresponding theories for groups and semigroups.
We refer the reader to the survey articles \cite{Me:GSCC07,Me:Pisppa20} by Meakin for a nice treatment of the similarities, differences and resulting relationships. Perhaps the starkest expression of the difference is the recent result of Gray \cite{Gr:Uwp20} exhibiting an example of a one-relation inverse semigroup with an undecidable word problem;
and this can be contrasted to some extent with the more positive results such as
\cite{BMM:Wpim94,IMM:Orim01,GR:Gus}.

Our paper is structured as follows. 
In \cref{sec:prelim}, we give definitions and basic results about semigroup presentations and the monogenic free inverse semigroup. Sections \ref{sec:forward} and \ref{sec:backward} are devoted to proving each of the two implications in the \nameref{thm:main}. The \nameref{cor:inv_fp} is quickly proved at the end of the paper.

%%%%%%%%%%%%%%%%%%%%%%%%%%%%%%%%%%%%%%%%%%%%%%%%%%%%%%%%%%%%PRELIMINARIES%%%%%%%%%%%%%%%%%%%%%%%%%%%%%%%%%%%
\section{Preliminaries}
\label{sec:prelim}

\subsection{General notation}
The set of natural numbers $\{1,2,\dots\}$ will be denoted by $\mathbb{N}$, the set of non-negative integers by 
$\mathbb{N}_0:=\mathbb{N}\cup\{0\}$, and the set of integers by $\mathbb{Z}$.
We will use the standard interval notations for subsets of $\mathbb{Z}$: $[m,n]:=\{m,m+1,\dots, n\}$, $[n]:=[1,n]$.

\subsection{Semigroup presentations}

Let $A$ be a set of letters and let $A^{+}$ be the free semigroup on $A$.  
A \emph{semigroup presentation} is an ordered pair $\langle A \mid R \rangle$; we call $A$ the \emph{generators} and $R \subseteq A^{+}\times A^{+}$ the \emph{defining relations} of $\langle A \mid R \rangle$. We write $R^{\sharp}$ for the congruence on $A^+$ generated by $R$. 
A semigroup $S$ is \emph{defined by $\langle A \mid R\rangle$ with respect to a mapping} $\theta \colon A \to S$ if the unique extension of $\theta$ to a homomorphism $\theta:A^+\rightarrow S$ is onto and $\ker\theta = R^{\sharp}$. 
We say that $S$ is \emph{finitely presented} if both $A$ and $R$ can be chosen to be finite.
Throughout this paper we will use the same symbol for a map $A\rightarrow S$ and for its unique homomorphism extension $A^+\rightarrow S$.

Suppose a semigroup $S$ is defined by $\langle A\mid R\rangle$ via $\theta\colon A\rightarrow S$.
Let $u, v\in A^+$. We say that \emph{$v$ is obtained from $u$ by applying a relation in $R$} if $u = prq$ and $v = psq$ where $p, q \in A^{\ast}$ and $(r,s)$ or $(s, r)$ is in $R$. An \emph{elementary sequence 
(with respect to $R$,  from $u$ to $v$)} is a sequence $w_1, \dots, w_n$ such that $w_1 = u$, $w_n = v$ and each $w_{i+1}$ is obtained from $w_i$ by applying a relation in $R$. Then $u\theta = v\theta$ if and only if there exists an elementary sequence from $u$ to $v$.

Now let $T$ be another semigroup, and $\phi: A\rightarrow T$ a mapping such that its extension $\phi \colon A^+\rightarrow T$ is onto.
We say that $T$ \emph{satisfies} the defining relations $R$ if for each $(r, s)\in R$ we have $r\phi = s\phi$, i.e. if $R \subseteq \ker\phi$. Then, there is a unique onto homomorphism $\psi \colon S \to T$ 
satisfying $\phi=\theta\psi$. In particular, $T$ is a homomorphic image of $S$. 
For a more systematic introduction to semigroup presentations, see \cite[Section 1.12]{CliffordPreston1}.

\subsection{Monogenic free inverse semigroups} 
For background on inverse semigroups we refer the reader to any standard textbook such as \cite{CliffordPreston1, Howie, Petrich, Lawson}.
It is well known that free objects exist for inverse semigroups; for an account see \cite[Chapter 6]{Lawson}.
In this paper, we focus solely on the monogenic free inverse semigroup, which will be denoted by $FI_1$. 
It can be represented as follows. The elements of $FI_1$ are certain triples:
\[
FI_1 := \bigl\{(-a, p, b)\in \mathbb{Z}^3 : a, b \geq 0, \, a+b >0, \, -a \leq p \leq b \bigr\},
\]
and the multiplication and inversion are as follows:
\[
\begin{aligned}
(-a_1, p_1, b_1)(-a_2, p_2, b_2) &= (-\max(a_1, a_2-p_1), p_1+p_2, \max(b_1, b_2+p_1)),\\
(-a, p, b)^{-1} &= (-(a+p), -p, b-p).
\end{aligned}
\]
An element $(-a, p, b)\in FI_1$ is an idempotent if and only if $p = 0$.

Some readers may be more used to viewing the elements in a free inverse semigroup as Munn trees; see \cite{Munn:Fis} and \cite[Chapter 6]{Lawson}.
For $FI_1$ the Munn trees are simply directed paths with inital and terminal vertices, and the translation between the two representations is straightforward. The triple $(-a,p,b)$ corresponds to the following Munn tree:

\begin{figure}[H]\centering

\begin{tikzpicture}[main/.style = {draw, circle, thick, inner sep = 1.5pt, minimum size=1pt}]
\node[main] (1) {};
\node[main] (2) [right = 0.8cm of 1] {};
\node[main] (3) [right = 2cm of 1] {};
\node[main, fill = black] (4) [right= 0.8cm of 3] {};
\node[main] (5) [right = 0.8cm of 4] {};
\node[main] (6) [right = 1.2cm of 5] {};
\node[main, fill=black] (7) [right = 0.8cm of 6] {};
\node[main] (8) [right = 0.8cm of 7] {};
\node[main] (9) [right = 1.2 of 8] {};
\node[main] (10) [right = 0.8cm of 9] {};

\node at ($(2)!.5!(3)$) {\ldots};
\node at ($(5)!.5!(6)$) {\ldots};
\node at ($(8)!.5!(9)$) {\ldots};

\draw[->] (1) -- (2);
\draw[->] (3) -- (4);
\draw[->] (4) -- (5);
\draw[->] (6) -- (7);
\draw[->] (7) -- (8);
\draw[->] (9) -- (10);
\draw[->] ($(4)-(60:5mm)$) -- (4);
\draw[->] (7) -- ($(7)+(60:5mm)$);

\draw [decorate,decoration={brace,amplitude=5pt,raise=5pt}] (1) -- (4) node[midway,above,yshift=10pt]{$a$};

\draw [decorate,decoration={brace,amplitude=5pt,raise=5pt}] (4) -- (7) node[midway,above,yshift=10pt]{$\vert p \vert$};

\draw [decorate,decoration={brace,amplitude=5pt,mirror, raise=5pt}] (4) -- (10) node[midway,above,yshift=-25pt]{$b$};

\end{tikzpicture}
\end{figure}

In the figure, the initial and terminal vertices are indicated by the `loose' in- and out-arrows respectively.
The sign of $p$ is positive if and only if the terminal vertex lies on the right hand side of the initial vertex. The numbers of edges between the initial and leftmost vertices, the initial and rightmost vertices, and the initial and terminal vertices correspond to $a$, $b$ and $\vert p \vert$, respectively.

We now give a semigroup presentation for $FI_1$.
\pagebreak

\begin{proposition}[\protect{\cite[Lemma 1]{Schein1975}}]
\label{pro:FI_1_present}
The monogenic free inverse semigroup $FI_1$ is defined by:
\[
\langle x, x^{-1} \mid \mathfrak{R} \rangle
\]
via $\phi\colon x\mapsto (0, 1, 1),\ x^{-1}\mapsto (-1, -1, 0)$,
with
\[
\mathfrak{R} := \bigl\{(xx^{-1}x, x), (x^{-1}xx^{-1}, x^{-1})\bigr\}\\
\cup
\bigl\{ (x^{-i}x^ix^jx^{-j}, x^{j}x^{-j}x^{-i}x^{i})\colon i, j \in \mathbb{N}_0,\ i+j>0)\bigr\}.\qqqed
\]
\end{proposition}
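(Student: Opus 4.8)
The plan is to show that the homomorphism $\phi\colon A^+\to FI_1$ extending the given map (where $A=\{x,x^{-1}\}$) is onto with $\ker\phi=\mathfrak{R}^\sharp$; by the generalities in \cref{sec:prelim} this is precisely what it means for $FI_1$ to be defined by $\langle x,x^{-1}\mid\mathfrak{R}\rangle$. Surjectivity is immediate from the ``walk'' interpretation of the multiplication: a word $w\in A^+$ traces a walk on $\mathbb{Z}$ starting at $0$ and stepping $+1$ for each $x$ and $-1$ for each $x^{-1}$, and $w\phi=(-a,p,b)$ records the minimum $-a$, the endpoint $p$, and the maximum $b$ of this walk; every admissible triple clearly arises. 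The inclusion $\mathfrak{R}^\sharp\subseteq\ker\phi$ is a finite check: a direct computation with the multiplication formula gives $(xx^{-1}x)\phi=x\phi$ and $(x^{-1}xx^{-1})\phi=x^{-1}\phi$, while both sides of each commuting relation evaluate to $(-i,0,j)$. This already yields an onto homomorphism $\psi\colon S\to FI_1$, where $S:=A^+/\mathfrak{R}^\sharp$, and it remains to prove the reverse inclusion $\ker\phi\subseteq\mathfrak{R}^\sharp$, that is, that $\psi$ is injective.

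I would establish injectivity through normal forms. Writing $e_{a,b}:=x^{-a}x^ax^bx^{-b}$ (with the convention that $x^0$ is empty, so each $e_{a,b}$ with $a+b>0$ is a genuine nonempty word) and $x^p:=(x^{-1})^{-p}$ for $p<0$, set
\[
N:=\bigl\{e_{a,b}\,x^p : a,b\ge 0,\ a+b>0,\ -a\le p\le b\bigr\}\subseteq A^+ .
\]
A short computation shows $e_{a,b}\phi=(-a,0,b)$ and hence $(e_{a,b}x^p)\phi=(-a,p,b)$ for every admissible triple; since $(-a,p,b)$ determines $(a,b,p)$, the restriction $\phi|_N\colon N\to FI_1$ is a bijection. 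Granting this, the proof finishes by the standard normal-form argument: if I can show that every $w\in A^+$ is $\mathfrak{R}^\sharp$-equivalent to some (necessarily unique) element $\nu(w)\in N$, then for any $u,v$ with $u\phi=v\phi$ we get $\nu(u)\phi=u\phi=v\phi=\nu(v)\phi$ (using $\mathfrak{R}^\sharp\subseteq\ker\phi$), whence $\nu(u)=\nu(v)$ by injectivity of $\phi|_N$, and therefore $u\mathrel{\mathfrak{R}^\sharp}\nu(u)=\nu(v)\mathrel{\mathfrak{R}^\sharp}v$.

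The remaining, and by far the hardest, task is this normalization: every word reduces into $N$ using only $\mathfrak{R}$. I would prove it by induction on the length of $w$, computing $\nu(w'x^{\varepsilon})$ from $\nu(w')=e_{a,b}x^p$ one letter at a time. This first forces me to develop, purely from $\mathfrak{R}$, a small calculus for the basic idempotents: (i) that the idempotents $x^{-i}x^i$ and $x^jx^{-j}$ pairwise commute and that products of them collapse to a single $e_{a',b'}$ (an \emph{absorption}/semilattice law, e.g.\ $e_{a,b}e_{c,d}\mathrel{\mathfrak{R}^\sharp}e_{\max(a,c),\,\max(b,d)}$), for which the second family of relations in $\mathfrak{R}$ is tailor-made; (ii) the ``local'' laws $x^nx^{-n}x^n\mathrel{\mathfrak{R}^\sharp}x^n$ and its dual, obtained by iterating $xx^{-1}x\mathrel{\mathfrak{R}^\sharp}x$; and (iii) \emph{conjugation} laws describing how a single generator moves past an idempotent, such as $x^{-1}e_{a,b}\mathrel{\mathfrak{R}^\sharp}e_{a+1,\,b-1}\,x^{-1}$ (for $b\ge 1$) and its mirror image. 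With these in hand, appending $x^{\varepsilon}$ either extends the tracked interval by one (handled by a conjugation law that enlarges $e_{a,b}$) or stays inside it (handled by pushing the resulting small idempotent to the front and absorbing it), and in both cases the word is returned to the canonical shape $e_{a',b'}x^{p+\varepsilon}$. I expect the bookkeeping in (i)--(iii)---coaxing the relations to reorder and absorb idempotents and to commute generators past them in all sign cases---to be the main obstacle, since $\mathfrak{R}$ supplies commutativity only between two idempotents and cancellation only inside the length-three words $xx^{-1}x$ and $x^{-1}xx^{-1}$, so every more global manipulation must be bootstrapped from these by careful induction.
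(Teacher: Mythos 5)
The first thing to note is that the paper contains no proof of this proposition at all: it is quoted from Schein \cite[Lemma 1]{Schein1975}, with the qed symbol attached directly to the statement. So there is no argument of the paper to compare yours against, and I can only assess your attempt on its own merits. Your architecture is the standard (and correct) route: surjectivity of $\phi$ via the walk interpretation is right; the finite-type verification that $\mathfrak{R}\subseteq\ker\phi$ is right; the set $N=\{e_{a,b}x^p\}$ is a genuine set of normal forms and $\phi|_N$ is indeed a bijection onto $FI_1$; and the reduction of the whole problem to the normalization claim (every word of $\{x,x^{-1}\}^+$ is $\mathfrak{R}^\sharp$-equivalent to an element of $N$) is logically sound.

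The gap is that this normalization claim, which is the entire mathematical content of the proposition, is never proved: your items (i)--(iii) are a plan, not an argument, and the one place where you commit to how the plan would be executed is wrong. Writing $\equiv$ for $\mathfrak{R}^\sharp$-equivalence, your claim (ii) asserts that $x^nx^{-n}x^n\equiv x^n$ is ``obtained by iterating $xx^{-1}x\equiv x$''; this fails. The congruence generated by the two Wagner relations alone does \emph{not} identify $x^2x^{-2}x^2$ with $x^2$: the rewriting system $aba\to a$, $bab\to b$ on $\{a,b\}^+$ is terminating and locally confluent (the only overlaps are $abab$, $baba$, $ababa$, $babab$, and all resolve), hence confluent, and $aabbaa$ and $aa$ are distinct irreducible words, so $a^2b^2a^2\neq a^2$ in $\langle a,b\mid aba=a,\ bab=b\rangle$. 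A correct derivation must use the commutation family, e.g.\ $x^2x^{-2}x^2=x\,(xx^{-1}x^{-1}x)\,x\equiv x\,(x^{-1}xxx^{-1})\,x=(xx^{-1}x)(xx^{-1}x)\equiv x^2$, invoking the relation with $i=j=1$. In the opposite direction, your claim (iii) is vacuous: for $b\geq 1$ the words $x^{-1}e_{a,b}$ and $e_{a+1,b-1}x^{-1}$ are literally the same element of $\{x,x^{-1}\}^+$ (both consist of $a+1$ letters $x^{-1}$, then $a+b$ letters $x$, then $b$ letters $x^{-1}$), so the conjugation law that actually requires proof is the mirror one, $x\,e_{a,b}\equiv e_{a-1,b+1}\,x$ for $a\geq 1$, and it too genuinely needs the commutation relations. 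These two slips are symptomatic: the interplay between the Wagner relations and the commutation family is exactly where the difficulty of Schein's lemma lives, and until the idempotent calculus (i)--(iii) is actually established from $\mathfrak{R}$ and the induction is run, the proposition has not been proved.
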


Note that, with the notation as above, we have  $(x^{-a}x^{a}x^{b}x^{-b}x^{p})\phi=(-a, p, b)$.

We partition $FI_1$ into its \emph{$\mathscr{D}$-classes}:
\[
D_n:= \bigl\{ (-a,p,b)\in FI_1\colon a+b=n\bigr\} \quad (n\in \mathbb{N}).
\]
These arise from the standard Green theory (see \cite[Chapter 2]{Howie}) but this will not be needed here, nor will any of the other Green's equivalences $\mathscr{L},\mathscr{R},\mathscr{H},\mathscr{J}$.
For every $n\in\mathbb{N}$, the union $\bigcup_{i\geq n} D_i$ is an ideal of $FI_1$.

\begin{figure}[H]
\begin{center}
\begin{tikzpicture}[scale=0.6]
\node [draw, shape = circle, fill = black, label=right:{\small $(0,0,1)$}, inner sep=-2, scale=0.5] (f1) at (0.5,8) {};
\node[draw, shape=circle, fill=black, label=left:{\small $(-1,0,0)$}, inner sep=-2, scale=0.5] (e1) at (-0.5,8) {}; 
\foreach \i in {2, ..., 3}{
    \node[draw, shape=circle, fill=black, label=right:{\small $(0,0,\i)$}, inner sep=-2, scale=0.5] (f\i) at ($(0.5,8)+(-60:\i-1)$) {};
    \node[draw, shape=circle, fill=black, label=left:{\small $(-\i, 0, 0)$}, inner sep=-2, scale=0.5] (e\i) at ($(-0.5,8)+(-120:\i-1)$) {};
}

\draw (f1)--(f3);
\draw (e1)--(e3);

\foreach \i in {1, ..., 3} {
    \draw ($(f1)+(-60:\i-1)$) -- ($(e3)+(0:\i)$);
    \draw ($(e1)+(-120:\i-1)$) -- ($(f3)-(0:\i)$);
}

\node [draw, shape = circle, fill = black, inner sep=-2, scale=0.2] at ($(e3)+(-120:0.75)$) {};
\node [draw, shape = circle, fill = black, inner sep=-2, scale=0.2] at ($(e3)+(-120:1)$) {};
\node [draw, shape = circle, fill = black, inner sep=-2, scale=0.2] at ($(e3)+(-120:1.25)$) {};

\node [draw, shape = circle, fill = black, inner sep=-2, scale=0.2] at ($(e3)!0.5!(f3) + (-90:0.75)$) {};
\node [draw, shape = circle, fill = black, inner sep=-2, scale=0.2] at ($(e3)!0.5!(f3) + (-90:1)$) {};
\node [draw, shape = circle, fill = black, inner sep=-2, scale=0.2] at ($(e3)!0.5!(f3) + (-90:1.25)$) {};

\node [draw, shape = circle, fill = black, inner sep=-2, scale=0.2] at ($(f3)+(-60:0.75)$) {};
\node [draw, shape = circle, fill = black, inner sep=-2, scale=0.2] at ($(f3)+(-60:1)$) {};
\node [draw, shape = circle, fill = black, inner sep=-2, scale=0.2] at ($(f3)+(-60:1.25)$) {};

\node [draw, shape = circle, fill = black, label = left:{\small $(-i,0,0)$}, inner sep=-2, scale=0.5] (ei1) at ($(e1)+(-120:4)$) {};
\node [draw,  inner sep=-2, scale = 0.2] (ei2) at ($(ei1)+(-120:1)$) {};
\node [draw,  inner sep=-2, scale = 0.2] (ei3) at ($(ei2)+(-120:1)$) {};
\node [draw, inner sep=-2, scale = 0.2] (ei4) at ($(ei3)+(-120:1)$) {};
\node [draw,  inner sep=-2, scale = 0.2] (ei5) at ($(ei4)+(-120:1)$) {};
\node [draw,  inner sep=-2, scale = 0.2] (ei6) at ($(ei5)+(-120:1)$) {};
\node [draw,  inner sep=-2, scale = 0.2] (ei7) at ($(ei6)+(-120:1)$) {};
\node [draw, inner sep=-2, scale=0.2] (ei8) at ($(ei7)+(-120:1)$) {};

\draw (ei1)--(ei8);

\node [draw, inner sep=-2, scale=0.2] (fi1) at ($(f1)+(-60:4)$) {};
\node [draw, shape = circle, fill = black, label=right:{\small $(0,0,j)$}, inner sep=-2, scale=0.5] (fi2) at ($(fi1)+(-60:1)$) {};
\node [draw, inner sep=-2, scale=0.2] (fi3) at ($(fi2)+(-60:1)$) {};

\foreach \i in {4, ...,8} {
    \node[draw, inner sep=-2, scale = 0.2] (fi\i) at ($(fi1)+(-60:\i-1)$) {};
}
\draw (fi1)--(fi8);

\draw (ei1) -- ($(ei8)+(0:7)$);
\draw (ei2) -- ($(ei8)+(0:6)$);
\draw (ei3) -- ($(ei8)+(0:5)$);
\draw (ei4) -- ($(ei8)+(0:4)$);
\draw (ei5) -- ($(ei8)+(0:3)$);
\draw (ei6) -- ($(ei8)+(0:2)$);
\draw (ei7) -- ($(ei8)+(0:1)$);

\draw (fi1) -- ($(fi1)+(-120:7)$);
\draw (fi2) -- ($(fi2)+(-120:6)$);
\draw (fi3) -- ($(fi3)+(-120:5)$);
\draw (fi4) -- ($(fi4)+(-120:4)$);
\draw (fi5) -- ($(fi5)+(-120:3)$);
\draw (fi6) -- ($(fi6)+(-120:2)$);
\draw (fi7) -- ($(fi7)+(-120:1)$);

\foreach \i in {1, ...,4} {
    \draw ($(ei1)+(0:\i)$) -- ($(ei1)+(0:\i)+(-120:7)$);
    \draw ($(fi1)-(0:\i)$) -- ($(fi1)-(0:\i)+(-60:7)$);
}
\node [draw, shape = circle, fill = black, inner sep=-2.5, scale = 0.5] (eifj) at ($(ei1)+(-60:6)$) {};
\node [black, fill=white, inner sep=2pt] at ($(eifj) + (2:1.5)$) {\small $(-i,0,j)$};

\node [draw, shape = circle, fill = black, inner sep=-2, scale=0.2] at ($(ei8)+(-120:0.5)$) {};
\node [draw, shape = circle, fill = black, inner sep=-2, scale=0.2] at ($(ei8)+(-120:0.75)$) {};
\node [draw, shape = circle, fill = black, inner sep=-2, scale=0.2] at ($(ei8)+(-120:1)$) {};

\node [draw, shape = circle, fill = black, inner sep=-2, scale=0.2] at ($(ei8)!0.5!(fi8) + (-90:0.5)$) {};
\node [draw, shape = circle, fill = black, inner sep=-2, scale=0.2] at ($(ei8)!0.5!(fi8) + (-90:0.75)$) {};
\node [draw, shape = circle, fill = black, inner sep=-2, scale=0.2] at ($(ei8)!0.5!(fi8) + (-90:1)$) {};

\node [draw, shape = circle, fill = black, inner sep=-2, scale=0.2] at ($(fi8)+(-60:0.5)$) {};
\node [draw, shape = circle, fill = black, inner sep=-2, scale=0.2] at ($(fi8)+(-60:0.75)$) {};
\node [draw, shape = circle, fill = black, inner sep=-2, scale=0.2] at ($(fi8)+(-60:1)$) {};

\end{tikzpicture}
\caption{Semilattice of idempotents of $FI_1$.}
\label{fig:semilattice}
\end{center}
\end{figure}
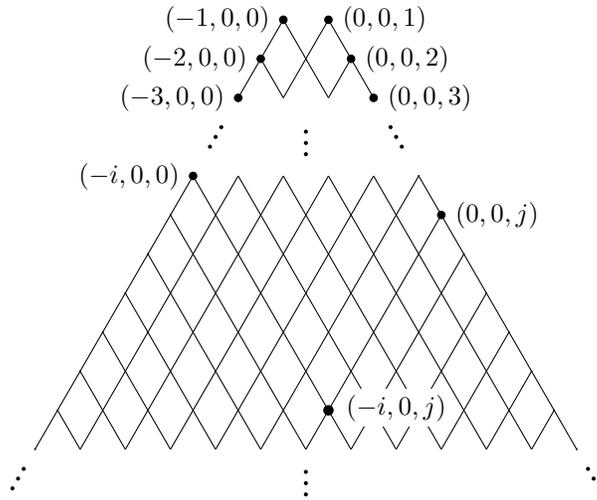

We say that an element $(-a, p, b)$ is \emph{positive, negative} or \emph{of sign} $0$ if $p > 0$, $p < 0$ or $p = 0$, respectively. The elements of sign $0$ are precisely the idempotents of $FI_1$. The set of all idempotents of $FI_1$ is partially ordered by 
\[
(-a, 0, b) \leq (-c, 0, d) \Leftrightarrow a \geq c \text{ and } b \geq d.
\]
This actually coincides with the natural partial order arising from general theory 
(see \cite[Chapter 5]{Howie} or \cite[Chapter 1]{Lawson}). The resulting partially ordered set is isomorphic to $\mathbb{N}_0\times \mathbb{N}_0\setminus \{(0,0)\}$ and is depicted in \cref{fig:semilattice}. In the diagram, the idempotents are in the same $\mathscr{D}$-class if and only if they are on the same horizontal level. Two idempotents $(-a, 0, b)$ and $(-c, 0, d)$ are \emph{incomparable} if and only if $(a-c)(b-d)<0$.

%%%%%%%%%%%%%%%%%%%%%%%%%%%%%%%%%%%%%%%%%%%%%%%%%%%%%%%%%%%%FORWARD%%%%%%%%%%%%%%%%%%%%%%%%%%%%%%%%%%%%%%%%%
\section{Infinitely many idempotents}
\label{sec:forward}

The aim of this section is to prove the following proposition which is the forward direction of the \nameref{thm:main}.
\begin{proposition}
\label{prop:forward}
Let $S$ be a finitely generated subsemigroup of $FI_1$. If $S$ contains infinitely many idempotents, then $S$ is not finitely presented.
\end{proposition}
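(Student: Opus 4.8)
The plan is to assume that $S$ is finitely presented and derive a contradiction by exhibiting, for every finite set of short defining relations, a single true relation of $S$ that those relations cannot deduce. Concretely, fix a finite generating set $A$ with projection $\theta\colon A^+\to S$ and take $R:=\ker\theta$, so that $S$ is defined by $\langle A\mid R\rangle$. Being finitely presented, $S$ is then also defined by $\langle A\mid R_0\rangle$ for some \emph{finite} $R_0\subseteq R$, i.e. $R_0^\sharp=R^\sharp=\ker\theta$; this is the standard fact that a finite presentation can be extracted from any defining set of relations over a finite alphabet. Let $N$ bound the lengths of the words occurring in $R_0$. It then suffices to produce words $u,v\in A^+$ with $u\theta=v\theta$ (so that $(u,v)\in\ker\theta=R_0^\sharp$) for which nonetheless $(u,v)\notin R_0^\sharp$. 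Since every relation of $R_0$ holds in $S$ and has length at most $N$, to certify $(u,v)\notin R_0^\sharp$ I will construct a quantity on $A^+$ which is invariant under replacing any factor of length $\le N$ by an $S$-equal factor, yet which distinguishes $u$ from $v$.

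The words $u,v$ will encode the commuting of two ``one-sided deep'' idempotents. Since $S$ contains infinitely many idempotents (elements of sign $0$) and hence idempotents of unbounded span $a+b$, it must have generators of both positive and negative sign; iterating a positive-sign generator and then cancelling its sign with negative-sign generators produces, via the multiplication formula, idempotents of arbitrarily large right-reach $b$ whose left-reach $a$ stays bounded, and symmetrically idempotents of arbitrarily large left-reach with bounded right-reach. Reading $FI_1$ through the Munn-tree/walk picture, a word $w\in A^+$ traces a walk on $\mathbb{Z}$ whose maximal right and left excursions are exactly the coordinates $b$ and $a$ of $w\theta$. Fixing a threshold $K$ (to be chosen much larger than $N$), I pick an idempotent $\lambda\in S$ with left-reach $>K$ and bounded right-reach, and $\rho\in S$ with right-reach $>K$ and bounded left-reach, represented by words $w_\lambda,w_\rho$. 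As idempotents of $FI_1$ commute, $e:=\lambda\rho=\rho\lambda$, so setting $u:=w_\lambda w_\rho$ and $v:=w_\rho w_\lambda$ gives $u\theta=v\theta=e$. In the walk of $u$ the deep-left level $-K$ is crossed (inside the $w_\lambda$-part) strictly before the deep-right level $+K$ (inside the $w_\rho$-part), whereas in $v$ the order is reversed.

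The invariant is the order in which these two deep levels are first reached: let $\Phi(w)\in\{\mathrm{L},\mathrm{R}\}$ record whether the walk of $w$ first attains a height below $-K$ or first attains a height above $+K$. By construction $\Phi(u)=\mathrm{L}$ and $\Phi(v)=\mathrm{R}$. The heart of the argument is the claim that $\Phi$ is unchanged when a factor of length $\le N$ is replaced by an $S$-equal factor: such a replacement leaves the walk identical outside a window of width at most $NC$ (where $C$ bounds the reach of a single generator) and preserves the heights at which the window is entered and left, since the replaced factors represent the same element of $FI_1$ and so have the same net sign and the same internal extremes. Choosing $K>NC$ guarantees that the levels $-K$ and $+K$ cannot both be crossed within one such window, and that any crossing outside the window is untouched; a short case analysis on where the first deep crossing sits then shows that the side first reached is preserved. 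Consequently $\Phi$ is constant on $R_0^\sharp$-classes, so $(u,v)\in R_0^\sharp$ would force $\Phi(u)=\Phi(v)$, the desired contradiction.

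I expect the main obstacle to be precisely this invariance claim: making $\Phi$ well defined and robust under bounded factor replacements requires pinning down exactly how a length-$\le N$ relation can perturb the walk, and calibrating the threshold $K$ against the reach bound $C$ so that the two deep crossings are always far apart and localised relative to any single application of a relation. The construction of $\lambda$ and $\rho$ with the required one-sided depth is the other point needing care, but it should follow fairly directly from the multiplication formula once the presence of generators of each sign is established.
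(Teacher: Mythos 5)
Your proposal is correct, and it takes a genuinely different route from the paper's proof. Both arguments share the same skeleton: infinitely many idempotents force generators of both signs (\cref{lem:useful}), from which one manufactures commuting idempotents that are arbitrarily deep on one side and boundedly shallow on the other (\cref{rm:fig}), and the contradiction is that the commutativity of two such incomparable idempotents cannot be a consequence of finitely many relations. The difference lies in the certificate of non-derivability. The paper's is model-theoretic: words are mapped into Schein's partial-transformation semigroup $\langle\alpha_m,\beta_m\rangle$ (\cref{lem:schein}); the ideal-restriction lemma (\cref{lem:ideal}, \cref{la:special}) shows this map respects every relation whose sides represent an element of $C_m$, while \cref{lem:schein}~\ref{schein_lem_4} shows the images of the two chosen idempotents fail to commute. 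Yours is syntactic: the invariant $\Phi$ (which of the levels $-K$, $+K$ is crossed first by the walk) is preserved under a single application of a relation of length $\le N$ once $K>NC$, because the substitution fixes the walk outside a window of span at most $NC$ and fixes the window's entry/exit heights and its extremes, these being determined by the common value $r\theta=s\theta$; your prefix/window/suffix case analysis does go through, since no window (and no single letter) can meet both levels. In effect your $\Phi$ is a hand-made replacement for Schein's truncated shifts, which encode exactly the same ``order of deep excursions'' information; what you gain is a self-contained elementary argument that bypasses Schein's lemma, the ideal lemma, and even the presentation of $FI_1$ itself (\cref{pro:FI_1_present}) and all words over $\{x,x^{-1}\}$; what the paper gains is brevity by citation, a reusable general tool (\cref{lem:ideal}), and a packaging of the invariance step as a homomorphism statement rather than a walk-level case analysis. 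Two points you should write out carefully when completing the details: $\Phi$ must be defined at the level of letters (the first letter whose exploration interval meets $(-\infty,-K]$ versus the first whose interval meets $[K,\infty)$), since a generator is an element of $FI_1$ rather than a word in $x,x^{-1}$ and so has no internal order of excursions — the definition is unambiguous precisely because $K>C$ prevents one letter from meeting both levels; and the standard fact that finite presentability yields a finite $R_0\subseteq\ker\theta$ with $R_0^\sharp=\ker\theta$ over your fixed generating set $A$ deserves a citation or proof, as it is where finiteness of the presentation actually enters your argument.
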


The proof is somewhat technical, but the underlying idea is as follows. It can be viewed as a variation of Schein's original proof of Theorem~\ref{thm:Schein}.
There, he exhibits two specific partial transformations, namely $\alpha_n$ and $\beta_n$ on $[0,n]$. 
The key strategy of Schein's proof is to assume, aiming for contradiction, that  $FI_1$ is finitely presented as a semigroup. 
Schein then fixes an arbitrary finite presentation wih respect to the generators $\{a,a^{-1}\}$, and  shows that, for a sufficiently large $n$, the semigroup $\langle \alpha_n, \beta_n\rangle$ satisfies all these finitely many relations, and yet fails to preserve commutativity of infinitely many 
pairs of idempotents. The essence of our adaptation is to find a map from a finitely generated subsemigroup of $FI_1$ containing infinitely many idempotents to the semigroup $\langle \alpha_n, \beta_n \rangle$. We will show that this map respects multiplication in finitely many $\mathscr{D}$-classes of $FI_1$, but yet again fails to preserve commutativity of infinitely many pairs of idempotents.

We start our proof by characterising when a finitely generated subsemigroup of $FI_1$ has infinitely many idempotents:

\begin{lemma}
\label{lem:useful}
For a subsemigroup $S$ of $FI_1$ generated by a finite set $A \subseteq FI_1$, the following are equivalent:
\begin{enumerate}[leftmargin=9mm,itemsep=1mm,label=\textup{(\roman*)}]
\item \label{useful_i} $S$ contains infinitely many idempotents;
\item \label{useful_ii} $S$ contains both positive and negative elements;
\item \label{useful_iii} $A$ contains both positive and negative elements.
\end{enumerate}
\end{lemma}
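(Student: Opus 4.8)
The plan is to prove the cycle of implications (iii)$\Rightarrow$(ii)$\Rightarrow$(i)$\Rightarrow$(iii). The organising device is the observation that the middle coordinate defines a semigroup homomorphism $\sigma\colon FI_1\to(\mathbb{Z},+)$, $(-a,p,b)\mapsto p$, since the multiplication rule adds middle coordinates. Under $\sigma$ the idempotents are exactly $\sigma^{-1}(0)$, the positive elements are $\sigma^{-1}(\mathbb{Z}_{>0})$ and the negative elements are $\sigma^{-1}(\mathbb{Z}_{<0})$; in particular the sign of a product is determined additively. The implication (iii)$\Rightarrow$(ii) is then immediate, since $A\subseteq S$.

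For (ii)$\Rightarrow$(i), I would fix a positive element $s=(-a,p,b)$ and a negative element $t=(-c,q,d)$ of $S$ and manufacture infinitely many idempotents explicitly. A short induction on the multiplication rule gives $s^m=(-a,mp,b+(m-1)p)$ and $t^n=(-(c-(n-1)q),nq,d)$. Taking $m=-kq$ and $n=kp$ for $k\in\mathbb{N}$ forces $mp+nq=0$, so $s^mt^n$ is an idempotent by additivity of $\sigma$; computing this product shows that its first coordinate is $-\max(a,c+q)$, constant in $k$, while its third coordinate equals $\max(b+(m-1)p,\,d+mp)$, which tends to infinity as $k\to\infty$. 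Distinct values of the third coordinate give distinct elements, so $S$ contains infinitely many idempotents.

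For (i)$\Rightarrow$(iii), I would argue contrapositively. Suppose $A$ fails to contain both signs; say $A$ contains no negative element, so $\sigma(A)\subseteq\mathbb{N}_0$. Any idempotent of $S$ is a product of generators whose $\sigma$-values are non-negative and sum to $0$, hence all of these factors must be idempotent generators. Since $(-a_1,0,b_1)(-a_2,0,b_2)=(-\max(a_1,a_2),0,\max(b_1,b_2))$, the finite set of idempotent generators produces only finitely many idempotents — a finite semilattice under coordinatewise maximum — so $S$ has only finitely many idempotents. The case where $A$ contains no positive element is symmetric, and together these establish the contrapositive of (i)$\Rightarrow$(iii).

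The conceptual heart is recognising $\sigma$ as a homomorphism, which trivialises all the sign bookkeeping and makes (iii)$\Leftrightarrow$(ii) essentially free. I expect the only genuinely computational step to be the power-and-product calculation in (ii)$\Rightarrow$(i), which is routine but must be carried out carefully to confirm that the third coordinate grows without bound. Finiteness of $A$ is used exactly once, in (i)$\Rightarrow$(iii), to guarantee that the semilattice generated by the idempotent generators is finite; this is the step that would fail for an infinite generating set.
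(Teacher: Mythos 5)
Your proposal is correct and follows essentially the same route as the paper: the implication from having both signs to having infinitely many idempotents is established by the same power-and-product computation (your exponents $m=-kq$, $n=kp$ are just the paper's common-multiple choice $n_1x$, $n_2x$ in disguise), and the converse direction uses the identical contrapositive argument that a sign-homogeneous generating set forces every idempotent to be a product of idempotent generators, of which there are only finitely many coordinatewise maxima. Your explicit framing via the sign homomorphism $\sigma$ and the cyclic arrangement (iii)$\Rightarrow$(ii)$\Rightarrow$(i)$\Rightarrow$(iii) are only presentational differences.
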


\begin{proof}
The equivalence of \ref{useful_ii} and \ref{useful_iii} is clear.

\ref{useful_iii} $\Rightarrow$ \ref{useful_i}. Assume there are elements $u_1 = (-a_1, p_1, b_1)$ and $u_2 = (-a_2, -p_2, b_2)$ in $A$ where $p_1, p_2 > 0$. Let $m$ be a common multiple of $p_1$ and $p_2$ so that $m = n_1p_1 = n_2p_2$ for some $n_1, n_2 \in \mathbb{N}$. A straightforward calculation shows that 
\[
u_1^{n_1x}u_2^{n_2x} = (-\max(a_1, a_2-p_2), 0, \max(n_1xp_1 + (b_1-p_1), n_1xp_1+b_2)) 
\]
gives distinct idempotents for each $x \in \mathbb{N}$.

\ref{useful_i} $\Rightarrow$ \ref{useful_iii}. Suppose $A$ consists entirely of non-positive or non-negative elements. Without loss of generality, we assume the latter. Hence, the product of elements of $A$ is an idempotent if and only if each factor in the product is an idempotent. There are only finitely many idempotents (sign $0$ elements) in $A$, which gives finitely many idempotents in $\langle A \rangle=S$.
\end{proof}

\begin{remark}
\label{rm:fig}
\begin{enumerate}[leftmargin=0.1mm,itemindent=0.8cm,labelwidth=\itemindent,labelsep=0cm,align=left,label={(\arabic*)}]
\item \label{rm_fig_1} A finitely generated subsemigroup of $FI_1$ which contains infinitely many idempotents has two elements of the form $(-a, p, b)$ and $(-c, -p, d)$ where $p > 0$.\smallskip
\item \label{rm_fig_2} \cref{fig:inf_idem} shows a set of idempotents of the form $u_1^{n_1x}u_2^{n_2x}u_2^{n_2y}u_1^{n_1y}$ where $x, y \in \mathbb{N}_0$ with $x+y > 0$. Note that this set is isomorphic again to $\mathbb{N}_0\times\mathbb{N}_0\setminus\{(0,0)\}$ as a partially ordered set. 
Also, there are infinitely many $n\in\mathbb{N}$ such that $D_n \cap \langle u_1, u_2\rangle \neq \emptyset$.\smallskip
\item \label{rm_fig_3} Let $e \in D_n\cap \langle u_1, u_2 \rangle$ be an idempotent of the form $u_1^{n_1x}u_2^{n_2x}u_2^{n_2y}u_1^{n_1y}$. It can be easily seen from \cref{fig:inf_idem} that there exists an idempotent $f \in D_m\cap \langle u_1, u_2 \rangle$ such that $m > n$ and $f$ is incomparable with $e$. More specifically, if $x = 0$, then $f$ can be chosen to be $u_1^{n_1z}u_2^{n_2z}$ with $z > y$. If $x \neq 0$, then we may choose $f$ to be $u_1^{n_1z}u_2^{n_2z}u_2^{n_2t}u_1^{n_1t}$ where $z = x-1$ and $t \geq y+2$.
\end{enumerate}
\end{remark}

\begin{figure}[h]
\centering
\begin{tikzpicture}[scale=0.53]
\node [draw, inner sep=-2, scale=0.2] (f1) at (0.5,8) {};
\node[draw, inner sep=-2, scale=0.2] (e1) at (-0.5,8) {}; 
\foreach \i in {2, ..., 12}{
    \node[draw, inner sep=-2, scale=0.2] (f\i) at ($(f1)+(-60:\i-1)$) {};
    \node[draw, inner sep=-2, scale=0.2] (e\i) at ($(e1)+(-120:\i-1)$) {};
}

\draw (f1)--(f12);
\draw (e1)--(e12);

\foreach \i in {1, ..., 11} {
    \draw (f\i) -- ($(e1)+(-120:11)+(0:\i)$);
    \draw (e\i) -- ($(f1)+(-60:11)-(0:\i)$);
}
\node[draw, shape = circle, fill=black, label=right:{\small $(0,0, \max (b_1-p_1, b_2))$}, inner sep=-2, scale=0.5] (fb0) at (f2) {};
\node[draw, shape = circle, fill=black, label=right:{\small $(0,0,\max(n_{1}p_{1}+(b_{1}-p_{1}), n_{1}p_{1}+b_{2}))$}, inner sep=-2, scale=0.5] (fb1) at (f5) {};

\node[draw, shape = circle, fill=black, label=left:{\small$(-\max(a_{2}-p_{2}, a_{1}),0,0)$}, inner sep=-2, scale=0.5] (ea0) at (e1) {};
\node[draw, shape = circle, fill=black, label=left:{\small $(-\max(n_{2}p_{2}+a_{2}-p_{2}, n_{2}p_{2}+a_{1}),0,0)$}, inner sep=-2, scale=0.5] (ea1) at (e4) {};

\node [draw, shape = circle, fill = red, inner sep=-2.5, scale = 0.8] (u1) at ($(f5)-(60:1)$) {};

\node [draw, shape = circle, fill = red, inner sep=-2.5, scale = 0.8] (u2) at ($(f8)-(60:1)$) {};

\node [draw, shape = circle, fill = red, inner sep=-2.5, scale = 0.8] (u3) at ($(f11)-(60:1)$) {};

\node [draw, shape = circle, fill = red, inner sep=-2.5, scale = 0.8] (u4) at ($(e4)-(120:2)$) {};

\node [draw, shape = circle, fill = red, inner sep=-2.5, scale = 0.8] (u5) at ($(e7)-(120:2)$) {};

\node [draw, shape = circle, fill = red, inner sep=-2.5, scale = 0.8] (u6) at ($(e10)-(120:2)$) {};

\node [draw, shape=circle, fill=red, inner sep=-2.5,scale = 0.8] (u1u'1) at ($(e4)-(120:5)$) {};
\node [draw, shape=circle, fill=red, inner sep=-2.5, scale = 0.8] (u2u'2) at ($(e7)-(120:5)$) {};
\node [draw, shape=circle, fill=red, inner sep=-2.5, scale = 0.8] (u3u'3) at ($(e4)-(120:8)$) {};

\node [draw, shape = circle, fill = black, inner sep=-2, scale=0.2] at ($(e12)+(-120:0.5)$) {};
\node [draw, shape = circle, fill = black, inner sep=-2, scale=0.2] at ($(e12)+(-120:0.75)$) {};
\node [draw, shape = circle, fill = black, inner sep=-2, scale=0.2] at ($(e12)+(-120:1)$) {};

\node [draw, shape = circle, fill = black, inner sep=-2, scale=0.2] at ($(e12)!0.5!(f12) + (-90:0.5)$) {};
\node [draw, shape = circle, fill = black, inner sep=-2, scale=0.2] at ($(e12)!0.5!(f12) + (-90:0.75)$) {};
\node [draw, shape = circle, fill = black, inner sep=-2, scale=0.2] at ($(e12)!0.5!(f12) + (-90:1)$) {};

\node [draw, shape = circle, fill = black, inner sep=-2, scale=0.2] at ($(f12)+(-60:0.5)$) {};
\node [draw, shape = circle, fill = black, inner sep=-2, scale=0.2] at ($(f12)+(-60:0.75)$) {};
\node [draw, shape = circle, fill = black, inner sep=-2, scale=0.2] at ($(f12)+(-60:1)$) {};
\end{tikzpicture}
\caption{The red dots are idempotents of the form $u_1^{n_1x}u_2^{n_2x}u_2^{n_2y}u_1^{n_1y}$ where $x, y \in \mathbb{N}_0$ with $x+y > 0$. The rightmost and leftmost idempotents are $u_1^{n_1x}u_2^{n_2x}$ and $u_2^{n_2y}u_1^{n_1y}$, where $x, y \in \mathbb{N}$, respectively.}
\label{fig:inf_idem}
\end{figure}
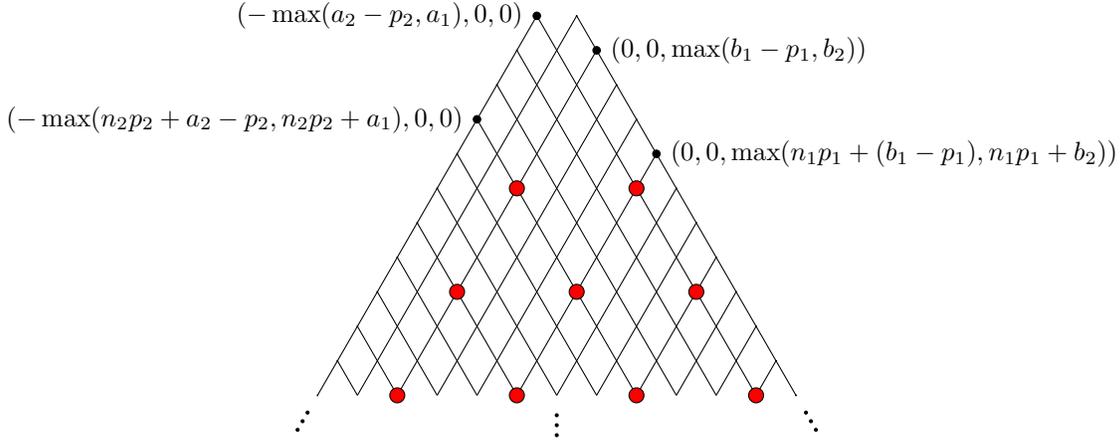

The following lemma introduces two partial transformations $\alpha_n$ and $\beta_n$. It was the key technical observation in \cite{Schein1975} towards proving the main result there (Theorem~\ref{thm:Schein}).

%In the previous version, the code for the transformations gave me errors so I changed the code.
\begin{lemma}[\protect{\cite[Lemma 2]{Schein1975}}]
\label{lem:schein}
For $n\in \mathbb{N}$ define two partial transformations  on $\{0, \dots, n\}$:
\begin{center}
\scalebox{0.95}{$\alpha_n = {\biggl(\begin{array}{ccccccc}
    0 & 1 & 2 & \cdots & n-2 & n-1 & n \\
    1 & 2 & 3 & \cdots &  n-1  & n & -
  \end{array}\biggr),} \ \ \beta_n = { \biggl(\begin{array}{ccccccc}
    0 & 1 & 2 & \cdots & n-2 & n-1 & n \\
    0 & 0 & 1 & \cdots &  n-3  & n-2 & n-1
  \end{array}\biggr)}.$}
\end{center}
Then, the following hold. 
 
\begin{enumerate}[leftmargin=9mm,itemsep=1mm,label=\textup{(\roman*)}]
    \item \label{schein_lem_1} $\alpha_n\beta_n\alpha_n = \alpha_n$;
    \item \label{schein_lem_2} $\beta_n\alpha_n\beta_n = \beta_n$;
    \item \label{schein_lem_3} $\beta_n^{i}\alpha_n^{i}\alpha_n^{j}\beta_n^{j} = \alpha_n^{j}\beta_n^{j}\beta_n^{i}\alpha_n^{i}$\, when $i>n$ or $j>n$ or $i+j \leq n$;
    \item \label{schein_lem_4} $\beta_n^{i}\alpha_n^{i}\alpha_n^{j}\beta_n^{j} \neq \alpha_n^{j}\beta_n^{j}\beta_n^{i}\alpha_n^{i}$\, when $i+j > n$ and $i, j \leq n$. \hfill \qed
\end{enumerate}
\end{lemma}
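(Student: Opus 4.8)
The plan is to compute everything directly from the two partial transformations, namely $\alpha_n\colon i\mapsto i+1$ defined on $\{0,\dots,n-1\}$ (and undefined at $n$), and $\beta_n\colon i\mapsto\max(i-1,0)$ defined on all of $\{0,\dots,n\}$. Throughout I would treat $\alpha_n,\beta_n$ as partial maps composed left to right, so that $x(\gamma\delta)=(x\gamma)\delta$ and $\operatorname{dom}(\gamma\delta)=\{x\in\operatorname{dom}\gamma : x\gamma\in\operatorname{dom}\delta\}$. The entire lemma reduces to identifying a few composite maps and comparing their products in the two orders; the only real work is keeping track of domains of partial maps.

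First I would record the powers: for $k\ge 0$ one has $\beta_n^k\colon i\mapsto\max(i-k,0)$, defined everywhere, while $\alpha_n^k\colon i\mapsto i+k$ is defined exactly on $\{0,\dots,n-k\}$ when $k\le n$ and is the empty map when $k>n$. Substituting these into each other gives the two palindromic composites that govern the statement,
\[
P_j:=\alpha_n^j\beta_n^j=\text{the identity on }\{0,\dots,n-j\}\ (j\le n),\qquad Q_i:=\beta_n^i\alpha_n^i\colon x\mapsto\max(x,i)\ (i\le n),
\]
with $P_j$ the empty map when $j>n$ and $Q_i$ the empty map when $i>n$. Each of these is a one-line verification.

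Parts (i) and (ii) then drop out at once. Since $\alpha_n\beta_n=P_1$ is the identity on $\operatorname{dom}\alpha_n=\{0,\dots,n-1\}$, post-composition by $\alpha_n$ changes nothing and $\alpha_n\beta_n\alpha_n=\alpha_n$; dually $\beta_n\alpha_n=Q_1\colon x\mapsto\max(x,1)$, and evaluating $(\beta_n\alpha_n)\beta_n$ pointwise returns $\beta_n$.

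For (iii) and (iv) I would compose $P_j$ and $Q_i$ in both orders. The left-hand side is $Q_iP_j$, defined at $x$ precisely when $\max(x,i)\le n-j$, with value $\max(x,i)$; the right-hand side is $P_jQ_i$, defined precisely when $x\le n-j$, again with value $\max(x,i)$. So the two maps always agree in value wherever both are defined, and everything reduces to comparing domains. If $i>n$ or $j>n$ then one of $P_j,Q_i$ is empty and both sides are the empty map; and if $i+j\le n$ then $i\le n-j$, so $\max(x,i)\le n-j$ is equivalent to $x\le n-j$ and the two domains coincide — this gives (iii). For (iv), with $i,j\le n$ and $i+j>n$ we have $i>n-j$, hence $\max(x,i)\ge i>n-j$ for every $x$: the left-hand side $Q_iP_j$ is nowhere defined, while $P_jQ_i$ is defined on the nonempty set $\{0,\dots,n-j\}$, so the two sides differ. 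I expect the domain bookkeeping to be the only delicate point, the crux being that the inequality $i+j>n$ is exactly what collapses $Q_iP_j$ to the empty map while leaving $P_jQ_i$ nonempty.
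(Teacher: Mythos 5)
Your proposal is correct, and in fact it supplies something the paper deliberately omits: the lemma is quoted from Schein's paper (his Lemma~2) and the \verb|\qed| in the statement signals a citation, not an argument, so there is no in-paper proof to compare against. Your verification is complete and the reduction is clean: with maps acting on the right (which matches the paper's conventions, e.g.\ $u\theta$, $w\phi$), one has $\alpha_n^k\colon x\mapsto x+k$ on $\{0,\dots,n-k\}$ (empty for $k>n$) and $\beta_n^k\colon x\mapsto\max(x-k,0)$ total, whence $P_j=\alpha_n^j\beta_n^j$ is the partial identity on $\{0,\dots,n-j\}$ and $Q_i=\beta_n^i\alpha_n^i$ is the total map $x\mapsto\max(x,i)$ for $i\le n$. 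Since both $Q_iP_j$ and $P_jQ_i$ take the value $\max(x,i)$ wherever defined, everything does reduce to domains, exactly as you say: $Q_iP_j$ is defined where $\max(x,i)\le n-j$ and $P_jQ_i$ where $x\le n-j$, so $i+j\le n$ (or one of $i,j$ exceeding $n$, making a factor empty) forces equality, while $i,j\le n$ with $i+j>n$ makes $Q_iP_j$ empty but leaves $P_jQ_i$ defined on the nonempty set $\{0,\dots,n-j\}$. Parts (i) and (ii) also check out pointwise. The one thing worth keeping explicit, as you did, is the composition convention: under the opposite (right-to-left) convention the expressions denote different maps, so stating the convention up front is not a formality but part of the proof.
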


We will also make use of the following technical lemma concerning presentations and ideals.

\begin{lemma}
\label{lem:ideal}
Let $S$ be a semigroup defined by $\langle A \mid R \rangle$ via $\theta \colon A \to S$. Let $I$ be an ideal of~$S$,  let $C:=S\setminus I$, and let 
\[
A_C := \{a\in A : a\theta \in C\},\quad
R_C := \bigl\{(r, s)\in R : r\theta=s\theta \in C\bigr\}.
\]
Let $T$ be a semigroup defined by $\langle A_C \mid R_C \rangle$ via $\pi \colon A_C\to T$. Suppose $u,v\in A^+$ are such that $u\theta = v\theta \in C$. Then, $u, v \in A_C^{+}$ and $u\pi = v\pi$.
\end{lemma}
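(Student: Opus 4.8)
The plan is to prove that an elementary sequence from $u$ to $v$ with respect to $R$ can be confined to words that map into $C$, and moreover that every such word lies in $A_C^+$, so that it is in fact an elementary sequence with respect to $R_C$. The starting observation is the following ideal/complement dichotomy: since $I$ is an ideal, a product $w\theta$ lies in $C = S\setminus I$ only if \emph{every} letter of $w$ maps into $C$; equivalently, if any letter of $w$ maps into $I$, then $w\theta\in I$. Indeed, writing $w = a_1\cdots a_k$, if some $a_i\theta\in I$ then $w\theta = (a_1\cdots a_{i-1})\theta\cdot a_i\theta\cdot(a_{i+1}\cdots a_k)\theta\in I$ because $I$ absorbs products on both sides. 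Contrapositively, $w\theta\in C$ forces $a_i\theta\in C$ for all $i$, i.e. $w\in A_C^+$. This immediately gives the first assertion $u,v\in A_C^+$, and it is the conceptual core on which everything else rests.

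Next I would exploit the standard fact, recalled in the Preliminaries, that $u\theta = v\theta$ holds in $S$ if and only if there is an elementary sequence $u = w_1, w_2,\dots, w_N = v$ with respect to $R$, each step applying a single relation of $R$. The key claim is that \emph{every} word $w_i$ in this sequence also satisfies $w_i\theta = u\theta\in C$. This is immediate because applying a relation of $R$ does not change the image under $\theta$: each $w_i\theta$ equals $u\theta\in C$ by definition of $R^\sharp = \ker\theta$. Combining this with the dichotomy of the previous paragraph, every $w_i$ lies in $A_C^+$, so the entire elementary sequence stays inside $A_C^+$.

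It then remains to check that each single rewriting step is legitimate \emph{as a step over the subpresentation} $\langle A_C\mid R_C\rangle$; that is, that the relation applied at each step actually belongs to $R_C$ and that the surrounding context words lie in $A_C^*$. Suppose $w_{i+1}$ is obtained from $w_i$ by writing $w_i = prq$, $w_{i+1} = psq$ with $(r,s)$ or $(s,r)$ in $R$ and $p,q\in A^*$. Since $w_i\in A_C^+$, every letter of $w_i$ maps into $C$, so $p,q\in A_C^*$ and $r$ (being a subword of $w_i$) lies in $A_C^+$; the same applies to $s$ as a subword of $w_{i+1}\in A_C^+$. Moreover $r\theta = s\theta$, and this common image equals $w_i\theta$ sandwiched by the invertible-on-$C$... more precisely one argues $r\theta\in C$: since $r$ is a subword of $w_i$ whose image is in $C$, and $I$ is an ideal, $r\theta\notin I$, for otherwise $w_i\theta = (p\theta)(r\theta)(q\theta)\in I$. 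Hence $(r,s)$ (resp. $(s,r)$) satisfies $r\theta = s\theta\in C$, so it belongs to $R_C$. Therefore each step is a valid elementary step over $R_C$, witnessing $u\pi = v\pi$ in $T$.

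The main obstacle, and the only genuinely substantive point, is the ideal dichotomy in the first paragraph: one must be careful to argue that a relation-application step does not merely preserve the image but keeps both the rewritten factor and its replacement inside $C$ (not just inside $S$), so that the relation used is genuinely one of $R_C$ rather than an arbitrary relation of $R$. Everything else is bookkeeping: translating the $R$-elementary sequence verbatim into an $R_C$-elementary sequence once one knows every intermediate word, and every applied relation, stays over the alphabet $A_C$ with image in $C$. I would present the argument in exactly this order — dichotomy, then invariance of the image along the sequence, then the per-step membership check — since the dichotomy is what licenses the other two.
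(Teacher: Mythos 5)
Your proposal is correct and follows essentially the same route as the paper's proof: take an $R$-elementary sequence from $u$ to $v$, use the ideal property of $I$ to show every intermediate word lies in $A_C^+$ and every applied relation has its (common) image in $C$ and hence belongs to $R_C$, so the sequence is in fact an $R_C$-elementary sequence witnessing $u\pi=v\pi$. The paper phrases the per-step check as an induction on sequence length, while you verify each step directly, but this is only a cosmetic difference.
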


\begin{proof}
Consider an elementary sequence 
from $u$ to $v$ with respect to $R$; let $n$ be its length.
That all the words in the sequence are in $A_C^+$ follows from $I$ being an ideal.
We claim that only relations from $R_C$ are used in the sequence.
Inductively, it is sufficient to prove this in the case of sequences of length $1$.
So, suppose $u=prq$, $v=psq$, where $p,q\in A_C^\ast$ and $(r,s)$ or $(s,r)$ is in $R$.
Again, since $I$ is an ideal, we must in fact have $r\theta=s\theta\in C$, and it follows that $u\pi = v\pi$, as required.
\end{proof}

Recall from \cref{pro:FI_1_present} that $FI_1$ is defined by $\langle x,x^{-1}\mid \mathfrak{R} \rangle$ via $\phi \colon  x \mapsto (0, 1, 1),\  x^{-1}\mapsto (-1, -1, 0)$. For $n\in \mathbb{N}$ define a map 
\[
\psi_n \colon \{x, x^{-1}\} \to \langle \alpha_n, \beta_n \rangle,\ 
x\mapsto \alpha_n,\  x^{-1}\mapsto \beta_n.
\]
Note that, of course, $\psi_n$ extends to a homomorphism $\{x,x^{-1}\}^+\rightarrow \langle\alpha_n,\beta_n\rangle$.
However, this homomorphism does not factor through $FI_1$, since $\alpha_n$ and $\beta_n$ do not satisfy all the relations from $\mathfrak{R}$ according to \cref{lem:schein}.
Let $C_n:=\bigcup_{i=1}^{n}D_i$ and $I_n:=\bigcup_{i=n+1}^{\infty}D_i$. 
Clearly, these two sets partition $FI_1$, and, as observed earlier, $I_n$  is an ideal. 
The following is a special case of \cref{lem:ideal}. %\Crev{Some sentence here}

\begin{lemma}
\label{la:special}
Let  the notation be as above, and let $u, v\in \{x, x^{-1}\}^{+}$. If $u\phi = v\phi \in C_n$ then $u\psi_n = v\psi_n$.
\end{lemma}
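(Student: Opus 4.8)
The plan is to realise \cref{la:special} as an instance of \cref{lem:ideal}, applied with $S=FI_1$, $A=\{x,x^{-1}\}$, $\theta=\phi$, $R=\mathfrak{R}$, and with the ideal $I=I_n$ and complement $C=C_n$. First I would identify the data $A_{C_n}$ and $R_{C_n}$ attached to this situation. Since $x\phi=(0,1,1)$ and $x^{-1}\phi=(-1,-1,0)$ both lie in $D_1\subseteq C_n$, we get $A_{C_n}=\{x,x^{-1}\}$, so the generating set is unchanged. To pin down $R_{C_n}$ I would compute the $\phi$-image of each relation of $\mathfrak{R}$: the two ``idempotent'' relations $(xx^{-1}x,x)$ and $(x^{-1}xx^{-1},x^{-1})$ map into $D_1$, while a routine calculation using the multiplication rule gives $(x^{-i}x^ix^jx^{-j})\phi=(x^jx^{-j}x^{-i}x^i)\phi=(-i,0,j)\in D_{i+j}$. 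Hence $(-i,0,j)\in C_n$ precisely when $i+j\le n$, so $R_{C_n}$ consists of the first two relations together with exactly those $(x^{-i}x^ix^jx^{-j},\,x^jx^{-j}x^{-i}x^i)$ for which $0<i+j\le n$.

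The key observation is then that $\psi_n$ satisfies every relation in $R_{C_n}$. The two idempotent relations are respected because $\alpha_n\beta_n\alpha_n=\alpha_n$ and $\beta_n\alpha_n\beta_n=\beta_n$ by parts \ref{schein_lem_1} and \ref{schein_lem_2} of \cref{lem:schein}. For the remaining relations in $R_{C_n}$, all of which carry $i+j\le n$, part \ref{schein_lem_3} of \cref{lem:schein} gives $\beta_n^{i}\alpha_n^{i}\alpha_n^{j}\beta_n^{j}=\alpha_n^{j}\beta_n^{j}\beta_n^{i}\alpha_n^{i}$, so these are respected as well. The crucial alignment is that the relations $\psi_n$ \emph{fails}, namely those with $i+j>n$ and $i,j\le n$ by part \ref{schein_lem_4} of \cref{lem:schein}, are exactly the ones whose $\phi$-image lands in the ideal $I_n$ and are therefore discarded from $R_{C_n}$.

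To conclude, let $T$ be the semigroup defined by $\langle A_{C_n}\mid R_{C_n}\rangle$ via $\pi\colon A_{C_n}\to T$. Because $\langle\alpha_n,\beta_n\rangle$ is generated by $x\psi_n$ and $x^{-1}\psi_n$ and satisfies every relation of $R_{C_n}$, the universal property of semigroup presentations recorded in \cref{sec:prelim} yields a (unique, onto) homomorphism $\rho\colon T\to\langle\alpha_n,\beta_n\rangle$ with $\psi_n=\pi\rho$ on $\{x,x^{-1}\}^{+}$. Now \cref{lem:ideal} applies: from $u\phi=v\phi\in C_n$ it gives $u,v\in A_{C_n}^{+}=\{x,x^{-1}\}^{+}$ and $u\pi=v\pi$ in $T$. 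Composing with $\rho$ then gives $u\psi_n=u\pi\rho=v\pi\rho=v\psi_n$, as required.

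I expect the main obstacle to be the bookkeeping of the second paragraph, that is, checking that $R_{C_n}$ is precisely the subset of $\mathfrak{R}$ respected by $\psi_n$. The one genuine computation is $(x^{-i}x^ix^jx^{-j})\phi=(-i,0,j)$, which places the relation in $D_{i+j}$ and thereby ties the cutoff $i+j\le n$ defining $C_n$ to the cutoff governing \cref{lem:schein}\,\ref{schein_lem_3} and \ref{schein_lem_4}; everything else is a formal deployment of \cref{lem:ideal} together with the universal property of presentations. One mild subtlety worth flagging is that \cref{lem:ideal} cannot be applied with $T=\langle\alpha_n,\beta_n\rangle$ directly, since $\langle\alpha_n,\beta_n\rangle$ need not be \emph{defined} by $\langle A_{C_n}\mid R_{C_n}\rangle$ (its kernel may be strictly larger); this is exactly why the intermediate semigroup $T$ and the quotient map $\rho$ are introduced.
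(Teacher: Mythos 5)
Your proposal is correct and follows essentially the same route as the paper's own proof: both compute $A_{C_n}$ and $\mathfrak{R}_{C_n}$, apply \cref{lem:ideal} to get equality in the intermediate semigroup $T_n$ defined by $\langle A_{C_n}\mid \mathfrak{R}_{C_n}\rangle$, and then use \cref{lem:schein}~\ref{schein_lem_1}--\ref{schein_lem_3} together with the universal property of presentations to push the equality down to $\langle\alpha_n,\beta_n\rangle$. Your closing remark about why the intermediate semigroup is needed (rather than applying \cref{lem:ideal} directly to $\langle\alpha_n,\beta_n\rangle$) is exactly the point the paper handles by introducing $T_n$ and the quotient map.
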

\begin{proof}
Using the notation from \cref{lem:ideal}, we have
\begin{align*}
A_{C_n} &= \{x, x^{-1}\},\\
\mathfrak{R}_{C_n} &= \bigl\{(r, s)\in \mathfrak{R} : r\phi=s\phi \in C_n\bigr\}\\
& = \bigl\{(xx^{-1}x, x), \, (x^{-1}xx^{-1}, x^{-1})\bigr\}\cup \bigl\{ (x^{-i}x^ix^jx^{-j}, x^{j}x^{-j}x^{-i}x^{i})\colon 0 < i+j \leq n \bigr\}.
\end{align*}
Let $T_n$ be the semigroup defined by $\langle A_{C_n} \mid \mathfrak{R}_{C_n}\rangle$ via $\pi_n\colon A_{C_n} \to T_n$. By \cref{lem:ideal}, we have $u\pi_n = v\pi_n$. By \cref{lem:schein} \ref{schein_lem_1}--\ref{schein_lem_3}, $\langle \alpha_n, \beta_n \rangle$ satisfies the relations in $\mathfrak{R}_{C_n}$.
Hence $\langle \alpha_n, \beta_n \rangle$ is a homomorphic image of $T_n$ with $\mathfrak{R}_{C_n}^{\sharp} \subseteq \ker\psi_n$, and so $u\psi_n = v\psi_n$.
\end{proof}

We will now define a map $\sigma_n\colon C_n\rightarrow \langle \alpha_n, \beta_n \rangle$ for each $n\in \mathbb{N}$ as follows. 
For every $s \in FI_1$ we choose an arbitrary $w_s\in \{x,x^{-1}\}^{+}$ representing $s$, i.e. such that $w_s\phi = s$. We then let $s\sigma_n := w_s\psi_n$. The next lemma plays a significant role in the proof of \cref{prop:forward}. It presents two situations: one in which $\sigma_n$ behaves like a homomorphism by preserving multiplication in finitely many $\mathscr{D}$-classes of $FI_1$, and the other where it does not.

\begin{lemma}
\label{la:new_cor}
Let $m, n\in\mathbb{N}$ be such that $m \geq 3n$. With the above notation, we have:
\begin{enumerate}[leftmargin=9mm,itemsep=1mm,label=\textup{(\roman*)}]
\item \label{new_cor_1} If $s_1, s_2\in C_n$ then $s_1s_2\in C_m$ and $(s_1s_2)\sigma_m = (s_1\sigma_m )(s_2\sigma_m)$.
\item \label{new_cor_2} If $e \in C_n$ and $f \in D_m$ are incomparable idempotents then $(e\sigma_m)(f\sigma_m) \neq (f\sigma_m)(e\sigma_m)$.
\end{enumerate}
\end{lemma}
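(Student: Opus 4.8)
Part \ref{new_cor_1} is essentially bookkeeping: I would first bound the $\mathscr{D}$-class index of a product of two elements of $C_n$ to confirm it lands in $C_m$, and then read off the homomorphism-like identity directly from \cref{la:special}. Part \ref{new_cor_2} is the substantive one. Since \cref{la:special} guarantees that $\sigma_m$ is independent of the chosen representatives on $C_m$, the plan is to compute $e\sigma_m$ and $f\sigma_m$ explicitly as partial transformations of $\{0,\dots,m\}$ using the convenient idempotent representatives from the note after \cref{pro:FI_1_present}, and to show that exactly one of the two composites is the empty (nowhere-defined) map.

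\textbf{Part \ref{new_cor_1}.} Write $s_1=(-a_1,p_1,b_1)$ and $s_2=(-a_2,p_2,b_2)$ with $a_i+b_i\le n$ and $-a_i\le p_i\le b_i$. From the multiplication rule the $\mathscr{D}$-class index of $s_1s_2$ is $\max(a_1,a_2-p_1)+\max(b_1,b_2+p_1)$. Using $-a_1\le p_1\le b_1$, I would check $\max(a_1,a_2-p_1)\le a_1+a_2$ and $\max(b_1,b_2+p_1)\le b_1+b_2$, so the index is at most $(a_1+b_1)+(a_2+b_2)\le 2n\le m$; hence $s_1s_2\in C_m$. For the identity, fix representatives $w_1,w_2\in\{x,x^{-1}\}^{+}$ of $s_1,s_2$. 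Then $w_1w_2$ and the chosen representative $w_{s_1s_2}$ both map under $\phi$ to $s_1s_2\in C_m$, so \cref{la:special} (with $m$ in place of $n$) gives $w_{s_1s_2}\psi_m=(w_1w_2)\psi_m$. As $\psi_m$ is a homomorphism on words, $(s_1s_2)\sigma_m=(w_1w_2)\psi_m=(w_1\psi_m)(w_2\psi_m)=(s_1\sigma_m)(s_2\sigma_m)$.

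\textbf{Part \ref{new_cor_2}.} Write $e=(-a,0,b)$ with $a+b\le n$ and $f=(-c,0,d)$ with $c+d=m$. By the note after \cref{pro:FI_1_present}, the word $x^{-a}x^ax^bx^{-b}$ represents $e$ and $x^{-c}x^cx^dx^{-d}$ represents $f$; since both elements lie in $C_m$, \cref{la:special} lets me evaluate $\sigma_m$ on these words, giving $e\sigma_m=\beta_m^a\alpha_m^a\alpha_m^b\beta_m^b$ and $f\sigma_m=\beta_m^c\alpha_m^c\alpha_m^d\beta_m^d$. Composing the generators (acting on the right, left to right) via $\alpha_m^k\colon i\mapsto i+k$ (defined for $i\le m-k$) and $\beta_m^k\colon i\mapsto\max(i-k,0)$, a short calculation should show that $e\sigma_m$ is the partial map $i\mapsto\max(i,a)$ with domain $[0,m-b]$, while $f\sigma_m$ (using $m-d=c$) collapses its whole domain $[0,c]$ onto the single point $c$. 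Incomparability means $(a-c)(b-d)<0$, so either $a>c,\ b<d$ or $a<c,\ b>d$. In the first case the image $[a,m-b]$ of $e\sigma_m$ misses $[0,c]=\operatorname{dom}(f\sigma_m)$ because $a>c$, so $(e\sigma_m)(f\sigma_m)$ is empty; whereas $c\in[0,m-b]=\operatorname{dom}(e\sigma_m)$ (as $c=m-d<m-b$), so $(f\sigma_m)(e\sigma_m)$ maps $[0,c]$ to $a$ and is nonempty. The second case is symmetric, with the roles of the two composites interchanged, so in either case the two composites differ.

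\textbf{Main obstacle.} The crux is the explicit identification of $e\sigma_m$ and $f\sigma_m$ as partial transformations and, in particular, the observation that membership of the top $\mathscr{D}$-class $D_m$ forces $f\sigma_m$ to have the tiny domain $[0,c]$ collapsing to the boundary point $c$. This directional degeneracy is exactly what makes one order of composition defined and the other empty, and it is where incomparability (rather than mere distinctness) of $e$ and $f$ is used; it specializes to part \ref{schein_lem_4} of \cref{lem:schein} when $e=(-i,0,0)$ and $f=(0,0,m)$, which is a useful consistency check.
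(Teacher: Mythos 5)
Your proposal is correct. Part \ref{new_cor_1} coincides with the paper's argument (you even establish the sharper bound $\max(a_1,a_2-p_1)+\max(b_1,b_2+p_1)\le 2n$, where the paper is content with $3n\le m$); the appeal to \cref{la:special} is exactly the paper's. Part \ref{new_cor_2} is where you genuinely diverge. The paper works entirely at the level of the defining relations: starting from $e\sigma_m=\beta_m^a\alpha_m^a\alpha_m^b\beta_m^b$ and $f\sigma_m=\beta_m^c\alpha_m^c\alpha_m^d\beta_m^d$ (obtained exactly as you obtain them), it commutes factors using \cref{lem:schein}~\ref{schein_lem_3} (legitimate since $b+c\le m$), absorbs using \ref{schein_lem_1} and \ref{schein_lem_2}, thereby reducing the two products to $\beta_m^a\alpha_m^a\alpha_m^d\beta_m^d$ and $\alpha_m^d\beta_m^d\beta_m^a\alpha_m^a$, and then quotes \cref{lem:schein}~\ref{schein_lem_4} with $a+d>c+d=m$ to conclude they differ. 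You instead identify $e\sigma_m$ and $f\sigma_m$ as explicit partial transformations --- $i\mapsto\max(i,a)$ with domain $[0,m-b]$, and the constant map onto $c$ with domain $[0,c]$ --- and show that incomparability forces exactly one of the two composites to be nowhere defined; I checked these formulas and both incomparability cases, and they are correct under the right-action (left-to-right) composition convention, which you correctly adopt to match the paper. The trade-off is clear: the paper's route is shorter because every computation with partial transformations is quarantined inside Schein's \cref{lem:schein}, used as a black box, whereas your route needs neither \ref{schein_lem_3} nor \ref{schein_lem_4} for this part, is self-contained, and makes visible exactly where incomparability (rather than mere distinctness) of $e$ and $f$ enters, via the degeneracy that kills one order of composition but not the other. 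A further small point in your favour: since $e\in C_n$ and $f\in D_m$ play asymmetric roles, the paper's ``without loss of generality $a>c$ and $b<d$'' needs a word of justification (one must check the reduction also runs with the roles of the two idempotents exchanged), whereas your explicit treatment of both cases avoids this altogether.
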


\begin{proof}
For brevity we write $\alpha, \beta, \psi, \sigma$ for $\alpha_m, \beta_m, \psi_m, \sigma_m$, respectively. 
\smallskip

\ref{new_cor_1} Write $s_1=(-a_1, p_1, b_1)$ and $s_2=(-a_2, p_2, b_2)$ where $a_1+b_1, a_2+b_2 \leq n$. By definition,
\[
s_1s_2 = (-\max(a_1, a_2-p_1), p_1+p_2, \max(b_1, b_2+p_1)).
\]
Since 
\[
\max(a_1, a_2-p_1)+\max(b_1, b_2+p_1)\leq 3n \leq m,
\]
it follows that $s_1s_2 \in C_m$.

We now  show that $(s_1s_2)\sigma = (s_1\sigma)(s_2\sigma)$.
By the definition of  $\sigma$ we have
\[
(s_1s_2)\sigma = (w_{s_1s_2})\psi\quad\text{and}\quad
(s_1\sigma)(s_2\sigma) = (w_{s_1}\psi)(w_{s_2}\psi) = (w_{s_1}w_{s_2})\psi.
\]
Note that $w_{s_1s_2}\phi = s_1s_2 = (w_{s_1}w_{s_2})\phi$.
Since $s_1s_2 \in C_m$, \cref{la:special} gives $w_{s_1s_2}\psi = (w_{s_1}w_{s_2})\psi$, and the result follows.
\smallskip

\ref{new_cor_2} 
Let $e = (-a, 0, b) $ and $f = (-c, 0, d)$.
The assumptions $e\in C_n$, $f\in D_m$ and incomparability of $e$ and $f$ imply
$a+b \leq n$, $c+d=m$ and 
$(a-c)(b-d)<0$. 
Without loss of generality, we assume that
$a > c$ and $b < d$.
Note that $w_e\phi = e = (x^{-a}x^ax^bx^{-b})\phi$, $w_f\phi = f = (x^{-c}x^cx^dx^{-d})\phi$.
Using $e, f\in C_m$ and 
\cref{la:special} again, we deduce that 
\[
e\sigma=w_e\psi=(x^{-a}x^ax^bx^{-b})\psi=\beta^a\alpha^a\alpha^b\beta^b\quad
\text{and similarly} \quad
f\sigma=\beta^c\alpha^c\alpha^d\beta^d.
\]
Now we consider the mappings $(e\sigma)(f\sigma)$ and $(f\sigma)(e\sigma)$. 
We compute the following:
\begin{align*}
(e\sigma)(f\sigma) &= \beta^a\alpha^a\alpha^b\beta^b\beta^c\alpha^c\alpha^d\beta^d\\
&=\beta^a\alpha^a\beta^c\alpha^c\alpha^b\beta^b\alpha^d\beta^d && \text{by \cref{lem:schein} \ref{schein_lem_3}
since } b+c\leq m\\
&=\beta^a\alpha^a\alpha^d\beta^d&& \text{since $\alpha^c\beta^c\alpha^c = \alpha^c$ and $\alpha^b\beta^b\alpha^b = \alpha^b$,}\\
(f\sigma)(e\sigma) &= \beta^c\alpha^c\alpha^d\beta^d\beta^a\alpha^a\alpha^b\beta^b\\
&=\alpha^d\beta^d\alpha^b\beta^b\beta^c\alpha^c\beta^a\alpha^a &&\text{by \cref{lem:schein} \ref{schein_lem_3} since } c+d,a+b,b+c\leq m\\
&=\alpha^d\beta^d\beta^a\alpha^a&&\text{since $\beta^b\alpha^b\beta^b = \beta^b$ and $\beta^c\alpha^c\beta^c = \beta^c$.}
\end{align*}
Since $a+d > c+d = m$, $(e\sigma)(f\sigma) \neq (f\sigma)(e\sigma)$ by \cref{lem:schein} \ref{schein_lem_4}.
\end{proof}

We are now ready to prove \cref{prop:forward}. 
A key idea is that if a finitely generated subsemigroup of $FI_1$ has infinitely many idempotents, then we can find large enough natural numbers $n$ and $m$ as in \cref{la:new_cor}.

\begin{proof}[Proof of \cref{prop:forward}]
Let $S$ be a finitely generated subsemigroup of $FI_1$ which contains infinitely many idempotents. Aiming for a contradiction, suppose that $S$ is defined by a finite presentation $\langle A \mid R \rangle$ via $\theta \colon A \to S$. Let $\eta \colon A^+ \to \{x, x^{-1}\}^+$ be the homomorphism defined by $a\eta = w_{a\theta}$ for $a\in A$. Note that $a\eta\phi = w_{a\theta}\phi = a\theta$, which shows $\eta\phi = \theta$.

By \cref{lem:useful}, $S$ contains both positive and negative elements. Recall that $S$ particularly contains some $u_1 = (-a_1, p, b_1)$ and $u_2 = (-a_2, -p, b_2)$ with $p > 0$ (\cref{rm:fig} \ref{rm_fig_1}). Let $n\in \mathbb{N}$ be such that $C_n$ contains all of the following elements:
\begin{itemize}
    \item an idempotent of the form $u_1^xu_2^xu_2^yu_1^y$ where $x,y\in \mathbb{N}_0$ with $x+y > 0$;
    \item $a\theta$ for each $a\in A$;
    \item $r\theta\,\, (= s\theta)$ for each $(r,s)\in R$.
\end{itemize}
Let $m\in \mathbb{N}$ be such that $m \geq 3n$ and $D_m$ contains an idempotent of the form $u_1^xu_2^xu_2^yu_1^y$. Recall the map $\sigma_m \colon C_m \to \langle \alpha_m, \beta_m \rangle$ and the natural homomorphism $\psi_m\colon \{x, x^{-1}\}^+ \to \langle \alpha_m, \beta_m \rangle$. \cref{fig:com_diag_1} shows the diagram of maps and homomorphisms we have defined.

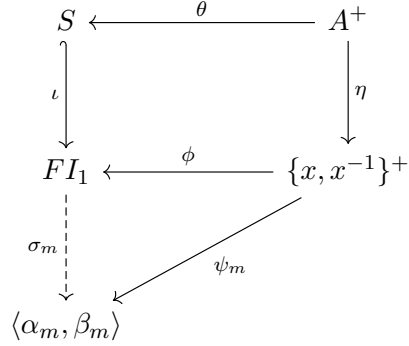
\begin{figure}[H]
\centering
\begin{tikzcd}[row sep=huge, column sep=huge]
S \arrow[d, hook', "\iota", swap]
& A^{+} \arrow[l, "\theta", sloped, above]\arrow[d, "\eta"] \\
FI_1 \arrow[d, dashed, "\sigma_m", swap]
& \{x,x^{-1}\}^{+} \arrow[l, "\phi", sloped, above] \arrow[dl, "\psi_m"] \\
\langle \alpha_m, \beta_m \rangle
\end{tikzcd}
\caption{Homomorphisms and maps in the proof of \cref{prop:forward}. Here $\iota$ denotes the inclusion mapping. The upper part of the diagram is commutative, i.e. $\theta\iota = \eta\phi$. Note that the domain of $\sigma_m$ is $C_m \subseteq FI_1$.}
\label{fig:com_diag_1}
\end{figure}

From now on, we take $m$ and $n$ to be fixed and we write $\alpha, \beta, \psi$ and $\sigma$ for $\alpha_m, \beta_m, \psi_m$ and $\sigma_m$, respectively. We claim that 
\begin{equation}
    \ker\theta \subseteq \ker\eta\psi. \label{eq:ker}
\end{equation}
Suppose $(r, s)\in R$. Note that $r\eta\phi = (w_{r\theta})\phi = r\theta = s\theta = (w_{s\theta})\phi = s\eta\phi$. Moreover, $r\theta\,(=s\theta) \in C_n\subseteq C_m$. Hence, $r\eta\psi = s\eta\psi$ by \cref{la:special} and so indeed we have (\ref{eq:ker}).

By our construction, there are idempotents of the form $u_1^xu_2^xu_2^yu_1^y$ in $C_n\cap S$ and $D_m\cap S$. 
Pick two idempotents $e$ and $f$ of this form such that $e\in C_n\cap S$, $f\in D_m\cap S$ and $e$ and $f$ are incomparable
 (see \cref{rm:fig} \ref{rm_fig_3}). 
 Now, let $v_e,v_f\in A^+$ be such that $v_e\theta = e$ and $v_f\theta = f$. Since $ef = fe$ in $S$, we have 
 \[
(v_ev_f)\theta= (v_e\theta)(v_f\theta) = ef=fe=  (v_f\theta)(v_e\theta)=(v_fv_e)\theta,
 \] 
i.e.\ $(v_ev_f, v_fv_e)\in \ker\theta$. Therefore, $(v_ev_f)\eta\psi = (v_fv_e)\eta\psi$ by (\ref{eq:ker}).

Note that we have $v_e\eta\phi = v_e\theta = e = w_e\phi$ and $e\in C_m$. By \cref{la:special}, $v_e\eta\psi = w_e\psi$. Similarly, $v_f\eta\psi = w_f\psi$. Then,
\[
(v_ev_f)\eta\psi = (v_e\eta\psi)(v_f\eta\psi) = (w_e\psi)(w_f\psi) = (e\sigma)(f\sigma),
\]
and, analogously,
$(v_fv_e)\eta\psi = (f\sigma)(e\sigma)$. However, $(e\sigma)(f\sigma) \neq (f\sigma)(e\sigma)$ by \cref{la:new_cor}~\ref{new_cor_2}. We have obtained a contradiction from the assumption that $S$ is finitely presented, and hence the proposition follows.
\end{proof}

%%%%%%%%%%%%%%%%%%%%%%%%%%%%%%%%%%%%%%%%%%%%%%%%%%%%%%%%%%%BACKWARD%%%%%%%%%%%%%%%%%%%%%%%%%%%%%%%%%%%%%%%%%
\section{Finitely many idempotents}
\label{sec:backward}

In this section, we prove the following proposition and thus complete the proof of the \nameref{thm:main}.

\begin{proposition}
\label{prop:backward}
Let $S$ be a finitely generated subsemigroup of $FI_1$. If $S$ contains finitely many idempotents then $S$ is finitely presented.
\end{proposition}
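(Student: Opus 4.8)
The plan is to reduce to the case where every element of $S$ is non-negative, show that such an $S$ has a tightly constrained ``finite-state-plus-counter'' structure, and then read a finite presentation off that structure. By \cref{lem:useful}, a finitely generated subsemigroup with only finitely many idempotents contains no positive element or no negative element. Inversion $s\mapsto s^{-1}$ is an anti-automorphism of $FI_1$ that reverses sign (since $(-a,p,b)^{-1}=(-(a+p),-p,b-p)$), carries idempotents to idempotents, and sends $S$ to an anti-isomorphic finitely generated subsemigroup $S^{-1}$; as a semigroup is finitely presented exactly when its opposite is, I may assume every element of $S$ has sign $\ge 0$. Writing the generators as $(-a_i,p_i,b_i)$, set $a_{\max}=\max_i a_i$ and $\rho_{\max}=\max_i(b_i-p_i)$.

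The key structural step is to bound the two ``overhang'' coordinates. Writing $\rho_w:=b_w-p_w\ge 0$, the multiplication law gives, for a product $uv$, left overhang $\max(a_u,a_v-p_u)\le\max(a_u,a_v)$ and right overhang $b_{uv}-\operatorname{sign}(uv)=\max(\rho_u-p_v,\rho_v)\le\max(\rho_u,\rho_v)$. Induction on word length then shows every element of $S$ has the form $(-a,p,p+\rho)$ with $0\le a\le a_{\max}$, $0\le\rho\le\rho_{\max}$ and $p\ge 0$; in particular each $\mathscr{D}$-slice of fixed sign is finite, of size at most $(a_{\max}+1)(\rho_{\max}+1)$, and $S$ is fibred over $\mathbb{N}_0$ by the sign homomorphism. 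Crucially, once $p_u\ge a_{\max}$ and $p_v\ge\rho_{\max}$ the two maxima stabilise and $(-a_u,p_u,p_u+\rho_u)(-a_v,p_v,p_v+\rho_v)=(-a_u,\,p_u+p_v,\,(p_u+p_v)+\rho_v)$: the product retains the ``left state'' $a_u$ and the ``right state'' $\rho_v$ and adds signs, which is a rectangular-band law over $(\mathbb{N}_0,+)$.

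From here I would build the presentation. Choose a threshold $N$ large enough that $N\ge a_{\max},\rho_{\max}$, that $N\ge p_i$ for all generators, and that above sign $N$ the occurring states have saturated to a product set $L\times R$ while the occurring signs form the tail of the numerical semigroup $\operatorname{sign}(S)\subseteq\mathbb{N}_0$. Take the finite set $B=\{s\in S:\operatorname{sign}(s)\le N\}$ as generators (it contains $A$, hence generates $S$), with defining relations the partial multiplication table $\bar s\,\bar t=\overline{st}$ whenever $s,t,st\in B$, together with the finitely many stability relations $\bar s\,\bar t=\bar s'\,\bar t'$ for $s,t,s',t'\in B$ of supra-threshold sign with $st=s't'$ in $S$. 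Conceptually the ideal $I=\{s\in S:\operatorname{sign}(s)>N\}$ is then a Rees matrix semigroup with zero sandwich matrix over the finitely generated commutative semigroup $T=\operatorname{sign}(S)\cap(N,\infty)$, hence finitely presented, while $S\setminus I=B$ is finite; the finite presentation of $S$ is obtained by gluing this finite ``transient'' bottom to the Rees-matrix ``top''.

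The main obstacle is precisely this gluing, carried out as a rewriting argument. Merging adjacent generators through the multiplication table stalls as soon as the running sign exceeds $N$, so the real work is to show that the stability relations, together with the eventual arithmetic-progression shape of $\operatorname{sign}(S)$, suffice to rewrite every word over $B$ into a canonical ``left-anchor $\cdot$ padding $\cdot$ right-anchor'' normal form for the unique state and sign it represents. Equivalently, the delicate points are to pin down the saturation threshold $N$ beyond which every achievable state occurs at every achievable sign (so that $I$ genuinely is a Rees matrix semigroup), and to confirm that attaching the finitely many low-sign elements requires only finitely many further relations. Establishing confluence of this rewriting, rather than the structure theory itself, is where I expect the effort to lie.
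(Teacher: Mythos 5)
Your opening moves coincide with the paper's: the reduction via \cref{lem:useful} (plus the opposite-semigroup observation) to the case where all elements are non-negative, the bounds $a\le a_{\max}$, $\rho\le\rho_{\max}$ on the two overhangs (this is in effect \cref{la:fini}, phrased through the decomposition of \cref{la:decompose}), and the stabilised multiplication law $(-a_u,p_u,p_u+\rho_u)(-a_v,p_v,p_v+\rho_v)=(-a_u,\,p_u+p_v,\,(p_u+p_v)+\rho_v)$ for supra-threshold signs, which is exactly condition~(\ref{eq:nice_cond}) and the Claim inside \cref{lem:nice}. The genuine gaps are in how you convert this structure into finite presentability. Your assertion that the ideal $I=\{s\in S:\operatorname{sign}(s)>N\}$ is finitely presented \emph{because} it is a ``Rees matrix semigroup over the finitely generated commutative semigroup $T$, hence finitely presented'' is unsupported: no such general principle is available, and superficially similar statements are false for semigroups --- for instance $\mathbb{N}\times\mathbb{N}$ is a direct product of two finitely presented semigroups that is not even finitely generated, so ``assembled from finitely presented pieces'' proves nothing by itself. (Also, a zero sandwich matrix would force all products to be zero; what your stabilised law actually yields is a direct product of a finite left-zero semigroup, a subsemigroup of $\mathbb{N}$, and a finite right-zero semigroup.) The claim is true, but its honest proof is precisely the step you are missing: $I$ is partitioned into the finitely many nonempty classes $I_{a,\rho}=N_{a,\rho+1}\cap I$; your stabilised law says this partition is a congruence; each class is a subsemigroup of $\mathbb{N}$, hence finitely presented by \cref{prop:sub_N}; and \cref{prop:union} (the union theorem, stated in the paper) then gives that $I$ is finitely presented. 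Note that this argument needs no saturation at all: the classes may realise different, non-matching sign sets, so the ``delicate point'' you flag --- that beyond some threshold the occurring states form a product set $L\times R$ realised at every achievable sign --- can simply be discarded rather than proved.

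The other deferred step, gluing the finite bottom $B=S\setminus I$ back on by building a rewriting system and proving confluence, is likewise unnecessary: $I$ is a subsemigroup of $S$ with finite complement, so \cref{prop:nik_large} (Ruškuc's large-subsemigroup theorem, also stated in the paper) immediately gives that $S$ is finitely presented if and only if $I$ is. Thus the two places where you ``expect the effort to lie'' are exactly the two black boxes the paper invokes; as written your argument is incomplete at both, and the one justification you do offer (the Rees-matrix claim) is invalid as stated. The paper's proof is your structural analysis, packaged as \cref{lem:nice} and \cref{la:subP} with the large subsemigroup $U=\{s\in S:\operatorname{sign}(s)>Q\}$, concluded by \cref{prop:union} together with applications of \cref{prop:nik_large}.
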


In our proof we will make use of the following three results:

\begin{proposition}[{\cite[Theorems 1.1, 1.3]{nik_large}}]
\label{prop:nik_large}
Let $T$ be a subsemigroup of a semigroup $S$ such that $S\setminus T$ is finite. Then $S$ is finitely generated (resp.\ presented) if and only if $T$ is finitely generated (resp.\ presented). \qqed
\end{proposition}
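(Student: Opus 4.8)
The plan is to prove both equivalences (finite generation and finite presentability) by Reidemeister--Schreier-type rewriting, exploiting throughout that $F:=S\setminus T$ is finite. Write $d:=|F|$, and recall that finite presentability of a finitely generated semigroup is independent of the chosen finite generating set, so we are free to pick convenient generators.

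\textbf{Finite generation.} The implication ``$T$ finitely generated $\Rightarrow S$ finitely generated'' is immediate: if $T=\langle X\rangle$ with $X$ finite, then $S=\langle X\cup F\rangle$, since every element of $S$ lies in $T$ or in $F$. For the converse, suppose $S=\langle A\rangle$ with $A$ finite, and set
\[
X:=\{\,t\in T: t=w\theta\text{ for some }w\in A^{+}\text{ with }|w|\le 2d+1\,\},
\]
which is finite. I would show $X$ generates $T$ by strong induction on the length $k$ of a shortest $A$-word representing a given $t\in T$. If $k\le 2d+1$, then $t\in X$. Otherwise, for each split point $i\in[1,k-1]$ at least one of the prefix value $(a_1\cdots a_i)\theta$ and the suffix value $(a_{i+1}\cdots a_k)\theta$ must lie in $F$; for if both lay in $T$, we could factor $t$ as a product of two $T$-elements with strictly shorter representatives and apply induction. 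But a shortest representative cannot repeat a prefix value in $F$ nor a suffix value in $F$ (either repetition lets us excise a factor and shorten the word), so at most $d$ split points are covered by prefixes and at most $d$ by suffixes; hence $k-1\le 2d$, contradicting $k>2d+1$. Thus the problematic case never arises, and $X$ generates $T$.

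\textbf{Finite presentability, the easy direction.} Assuming $T=\langle Y\mid Q\rangle$ with $Y,Q$ finite, I would present $S$ on the finite generating set $Y\cup F$ by the relations $Q$ together with a finite \emph{multiplication table} $M$: for each pair $c,d\in Y\cup F$, add a relation $cd=\omega_{c,d}$, where $\omega_{c,d}$ is a fixed word over $Y\cup F$ evaluating to the product in $S$ (a single $F$-letter if the product lies in $F$, a $Y$-word if it lies in $T$). There is an obvious onto homomorphism from $\langle Y\cup F\mid Q\cup M\rangle$ onto $S$, and to see that it is injective I would use $M$ to rewrite any word over $Y\cup F$ into a normal form --- either a single $F$-letter or a word over $Y$ --- and then invoke $\langle Y\mid Q\rangle$ to decide equality among $Y$-words. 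The only care needed is a terminating rewriting measure, which is standard.

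\textbf{Finite presentability, the hard direction, and the main obstacle.} Assume $S=\langle A\mid R\rangle$ with $A,R$ finite. Take the finite generating set $X$ of $T$ from the generation step, fix for each $x\in X$ a short $A$-word $\xi_x$ with $\xi_x\theta=x$, and use the inductive factorisation above to define a rewriting $\varphi$ sending each $A$-word representing a $T$-element to an $X$-word of the same value under the evaluation map $\theta_T\colon X^{+}\to T$. I would assemble a finite relation set $Q$ over $X$ from (i) the finitely many coincidences between the two $X$-decompositions of each product arising in the factorisation, and (ii) for each $(r,s)\in R$ and each pair of boundary states drawn from the finite set $F\cup\{1\}$, the relation obtained by rewriting $urv$ and $usv$ through $\varphi$. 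Soundness ($Q\subseteq\ker\theta_T$) is clear. The crux is \emph{completeness}: given $\bar u,\bar v\in X^{+}$ of equal value in $T$, lift them to $A$-words $u,v$ with $u\theta=v\theta$, take an elementary $R$-sequence $u=w_0\to\cdots\to w_N=v$, and simulate it by a $Q$-sequence from $\bar u$ to $\bar v$. The difficulty --- and what I expect to be the main obstacle --- is that intermediate words $w_i$ need not represent elements of $T$, so $w_i\varphi$ may be undefined; the sequence repeatedly leaves and re-enters $T$, and one must show that every such excursion into $F$ can be re-routed through $T$ using only the finitely many relation types in $Q$. This is again forced by $|F|<\infty$: a pigeonhole/bounded-excursion argument paralleling the generation step shows that the $F$-valued blocks between consecutive $T$-valued configurations range over a bounded, hence finite, set, which is exactly what makes $Q$ finite and the simulation possible.
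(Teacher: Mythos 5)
The paper does not prove this proposition at all --- it is quoted from Ru\v{s}kuc (1998), Theorems 1.1 and 1.3 --- so your attempt must be measured against that source, where the implication ``$S$ finitely presented $\Rightarrow$ $T$ finitely presented'' occupies essentially all of the work. Your first two parts are correct: the finite-generation argument (distinct prefix values in $F$, distinct suffix values in $F$, hence at most $2d$ bad split points, so any word of length $>2d+1$ splits as a product of two shorter $T$-words) is a clean and valid proof of the Theorem~1.1 part, and the easy presentability direction via the multiplication table $M$ works, with the lexicographic measure (number of $F$-letters, length) giving termination and normal forms being single $F$-letters or $Y$-words.

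The hard direction, however, contains a genuine gap, and moreover the obstacle you name is not the real one. Since every relation of $R$ holds in $S$, every word $w_i$ in an elementary sequence from $u$ to $v$ satisfies $w_i\theta=u\theta\in T$: intermediate words never ``leave $T$'', and $w_i\varphi$ is always defined. The actual difficulty is that your finite set $Q$ is neither well-defined nor visibly sufficient. First, your $\varphi$ is built from the shortest-word factorisation of the generation step, which is non-canonical and non-local, so ``the relation obtained by rewriting $urv$ and $usv$'' is not determined by $(r,s)$ together with boundary states in $F\cup\{1\}$ --- it depends on the actual words $u,v$, of which there are infinitely many --- and likewise the ``finitely many coincidences'' in (i) range over infinitely many products with no finiteness argument. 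Second, even if you replace $\varphi$ by a canonical left-to-right scan with carry in $F\cup\{1\}$ (itself requiring care: a greedy scan can terminate with a nontrivial carry in $F$, so the output does not represent $w\theta$ unless $X$ is enlarged and the final block treated specially), applying $r\to s$ inside $w_i$ changes the carry exiting the altered block from some $g_1$ to some $g_2$, after which the two scans of the common but arbitrary suffix $v$ emit \emph{different} generator strings for unboundedly long stretches until the states merge, if they ever do. Showing that these divergent emissions are reconciled by finitely many relation templates (indexed by state pairs and single letters, via induction on $|v|$) is precisely the technical heart of Ru\v{s}kuc's Theorem~1.3, and your appeal to ``a pigeonhole/bounded-excursion argument'' asserts rather than supplies it. As it stands, parts one and two are proofs; part three is a plan whose crux is missing.
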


An equivalence relation $\rho$ on a semigroup $S$ is called a \emph{congruence} if it is compatible with the multiplication,
i.e.\ if $(x/\rho)(y/\rho)\subseteq (xy)/\rho$ for all $x,y\in S$; see \cite[Section 1.5]{Howie}.
The \emph{index} of an equivalence relation is the number of its equivalence classes.

\begin{proposition}[{\cite[Theorem 4.1, Corollary 4.5]{union}}]
\label{prop:union}
Let $S$ be a semigroup and let $\rho$ be a congruence relation on $S$ of finite index such that every equivalence class is a subsemigroup of $S$. If each congruence class is finitely presented then so is $S$. \qqed
\end{proposition}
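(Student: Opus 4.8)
The plan is to realise $S$ as a finite \emph{band of semigroups} and to build a finite presentation by gluing together finite presentations of the $\rho$-classes with finitely many ``linking'' relations coming from products of generators. Write $B:=S/\rho$; by hypothesis $B$ is finite, and since every $\rho$-class is a subsemigroup, $B$ is a band (each of its elements is idempotent). Index the $\rho$-classes as $S_i$ ($i\in B$), so that $S=\bigsqcup_{i\in B}S_i$ and $S_iS_j\subseteq S_{ij}$. Fix finite presentations $\langle A_i\mid R_i\rangle$ of the $S_i$ via $\theta_i\colon A_i^+\to S_i$, with pairwise disjoint alphabets, and set $A:=\bigsqcup_{i\in B}A_i$, a finite set. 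Let $\theta\colon A^+\to S$ be the common extension of the $\theta_i$; it is onto since $S=\bigcup_i S_i$. For each ordered pair of generators $a\in A_i$, $b\in A_j$ choose a word $w_{a,b}\in A_{ij}^+$ with $w_{a,b}\theta=(a\theta)(b\theta)$, and put
\[
R:=\bigcup_{i\in B}R_i\ \cup\ \bigl\{(ab,\,w_{a,b}) : a,b\in A\bigr\}.
\]
Both $A$ and $R$ are finite, and every relation of $R$ holds in $S$, so $R^{\sharp}\subseteq\ker\theta$. It remains to prove the reverse inclusion.

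For this I would isolate two steps. \textbf{(Within a class.)} If $u,v\in A_i^{+}$ and $u\theta=v\theta$, then $u,v$ represent the same element of $S_i$, so $(u,v)\in R_i^{\sharp}\subseteq R^{\sharp}$. \textbf{(Reduction lemma.)} Every $u\in A^{+}$ is $R^{\sharp}$-equivalent to a word lying in $A_c^{+}$, where $c\in B$ is the class of $u\theta$. Granting these, if $u\theta=v\theta$ then $u$ and $v$ have the same target class $c$; reducing both into $A_c^{+}$ and applying the within-class step gives $(u,v)\in R^{\sharp}$, so $\ker\theta\subseteq R^{\sharp}$ and $S$ is defined by $\langle A\mid R\rangle$.

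The heart of the argument is the reduction lemma, and the right way to prove it is dictated by the structure of the finite band $B$. The naive move --- repeatedly replacing an adjacent pair $ab$ by $w_{a,b}$ --- fails on its own, because $w_{a,b}$ is a \emph{word} in the product class $A_{ij}$, so each merge leaves a residual block in an intermediate class and the process need not terminate on a single class. The cure is to choose the direction of absorption so that the residue always lands in the \emph{target} class $c$, whereupon a simple count of the letters not yet in $A_c$ strictly decreases at each step. Concretely: if $B$ is a semilattice then $c=\bigwedge_{\ell} i_\ell$ is a two-sided absorbing lower bound for the classes $i_\ell$ occurring in $u$, and an induction on length first produces a single $A_c$-block and then absorbs the remaining letters into it from either side; if $B$ is a left-zero (resp.\ right-zero) band then $c$ is the class of the first (resp.\ last) letter and one absorbs rightward into a front block (resp.\ leftward into a rear block); and if $B$ is a rectangular band $L\times R$ one runs these in two phases, first fixing the $L$-coordinate by front-absorption and then the $R$-coordinate by rear-absorption. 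In each case the generator-pair relations $(ab,w_{a,b})$ suffice, because on the relevant side $c\cdot i_\ell=c$ or $i_\ell\cdot c=c$ keeps the growing block inside $A_c$.

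The main obstacle is assembling the general case from these. By the Clifford--McLean structure theorem $B$ is a semilattice $Y$ of rectangular bands $B_y$, and the target satisfies $y_c=\bigwedge_{\ell} y_{i_\ell}$, the lowest level relevant to $u$. I would reduce in two stages: first a semilattice-style descent that drives all letters down to level $y_c$, then the rectangular-band reduction inside $B_{y_c}$. The delicate point --- and where the real work lies --- is that the rectangular components are non-commutative, so the descent cannot be carried out by naive two-sided absorption; one must interleave it with the $L$/$R$-coordinate bookkeeping, track how the structure maps of the band move coordinates between levels, and check throughout that only the finitely many generator-pair relations (together with the within-class relations $R_i$) are ever needed. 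Once this combinatorial reduction is established, the two steps above close the argument.
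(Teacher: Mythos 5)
First, a point of reference: the paper itself does not prove this proposition --- it is quoted with a citation to \cite{union}, so your attempt can only be compared with that source's argument. Your scaffolding is exactly the right (and the standard) one: $B:=S/\rho$ is a finite band, one takes disjoint finite presentations $\langle A_i\mid R_i\rangle$ of the classes, adds the finitely many generator-pair linking relations $(ab,w_{a,b})$ with $w_{a,b}\in A_{ij}^{+}$, and reduces everything to the within-class step plus the reduction lemma. The within-class step and the final assembly are correct as you state them (noting, for the assembly, that $R^{\sharp}\subseteq\ker\theta$ guarantees the reduced words still have equal images under $\theta$).

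The gap is in the reduction lemma, which you yourself flag as ``where the real work lies'' and then do not carry out: the two-stage descent through the Clifford--McLean decomposition, with its interleaving of semilattice descent and rectangular-band coordinate bookkeeping, is left as a programme rather than a proof. Moreover, the obstruction that drives you to this detour is illusory: the ``naive'' absorption terminates in complete generality, provided you organise it as a nested induction rather than merging adjacent pairs in arbitrary order. Induct on $|u|$: write $u=u'b$ with $b\in A_j$; by the inductive hypothesis $u'$ is $R^{\sharp}$-equivalent to a word $z'\in A_p^{+}$ lying in a \emph{single} class $p$ --- this homogeneity of the residue is precisely what arbitrary-order merging lacks. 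Now absorb the letters of $z'$ one at a time: the last letter $d$ of $z'$ merges with $b$ via $(db,w_{d,b})$ into a block in class $q:=pj$, and each subsequent merge of a class-$p$ letter $e$ with the first letter $f$ of the block replaces $ef$ by $w_{e,f}\in A_{pq}^{+}$, where $pq=p(pj)=(pp)j=pj=q$ using nothing but idempotency of $p$ in $B$. So the block's class is stable under left absorption, the number of unabsorbed letters strictly decreases, and the process ends with a word in $A_{pj}^{+}=A_{c}^{+}$ for $c$ the class of $u\theta$, as required. No Clifford--McLean structure theory, no $L/R$-coordinates, and no case split among semilattices, left/right-zero bands and rectangular bands is needed: the single identity $p\cdot pj=pj$ (or symmetrically $ij\cdot j=ij$ if you peel from the front) does all the work in every band at once. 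With this short induction inserted in place of your third and fourth paragraphs, your proof is complete and coincides in substance with the cited one.
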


The following proposition has already been discussed in the \nameref{sec:intro}.

\begin{proposition}
\label{prop:sub_N}
Any subsemigroup of $\mathbb{N}$ is finitely generated and finitely presented.\qqed
\end{proposition}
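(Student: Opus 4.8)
The plan is to exhibit each subsemigroup $T$ of $\mathbb{N}$ as a cofinite subsemigroup of an isomorphic copy of $\mathbb{N}$, and then appeal to \cref{prop:nik_large}. This rests on the fact that $\mathbb{N}$, being the free monogenic semigroup, is finitely generated and finitely presented: it is defined by $\langle a\mid\emptyset\rangle$, with one generator and no relations.

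First I would identify the right copy of $\mathbb{N}$. Let $d:=\gcd(T)$ be the greatest common divisor of all elements of $T$, and put $d\mathbb{N}:=\{d,2d,3d,\dots\}$. Then $T\subseteq d\mathbb{N}$, and $t\mapsto t/d$ is an isomorphism $d\mathbb{N}\to\mathbb{N}$; in particular $d\mathbb{N}$ is finitely generated and finitely presented.

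The substantive step is to show that $d\mathbb{N}\setminus T$ is finite. Dividing through by $d$, this is equivalent to showing that the subsemigroup $U:=\{t/d:t\in T\}$ of $\mathbb{N}$, which satisfies $\gcd(U)=1$, omits only finitely many positive integers. For this I would invoke B\'ezout's identity: since already finitely many elements of $U$ have gcd $1$, collecting the positive and negative B\'ezout coefficients separately produces two consecutive positive integers $Q,Q+1\in U$. Every nonempty sum $jQ+i(Q+1)=(i+j)Q+i$ with $i,j\geq 0$ then lies in $U$, so for each block size $s:=i+j$ the whole interval $[sQ,sQ+s]$ is contained in $U$; once $s\geq Q-1$ consecutive intervals overlap, whence every integer $\geq Q(Q-1)$ belongs to $U$. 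Thus $\mathbb{N}\setminus U$, and with it $d\mathbb{N}\setminus T$, is finite.

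It then remains to apply \cref{prop:nik_large} to the inclusion $T\subseteq d\mathbb{N}$: as $d\mathbb{N}\setminus T$ is finite and $d\mathbb{N}$ is finitely generated and finitely presented, the same holds for $T$. The only genuinely nontrivial ingredient is the cofiniteness claim --- the classical numerical-semigroup fact that a subsemigroup of $\mathbb{N}$ of gcd $1$ contains all sufficiently large integers; the remaining steps are routine bookkeeping around \cref{prop:nik_large}.
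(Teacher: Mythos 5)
Your proof is correct, but it takes a genuinely different route from the paper. The paper does not prove this proposition at all: it is stated with a \qed and justified by citations in the introduction --- finite generation is the result of Sit and Siu \cite{natural}, and finite presentability then follows from R\'edei's theorem \cite{redei} that every finitely generated \emph{commutative} semigroup is finitely presented. Your argument replaces both citations with a self-contained proof: you reduce to the classical numerical-semigroup (Frobenius-type) fact that a subsemigroup $U\leq\mathbb{N}$ with $\gcd(U)=1$ is cofinite --- your B\'ezout argument producing consecutive elements $Q,Q+1\in U$ and covering all integers $\geq Q(Q-1)$ is sound, with the degenerate case of all-nonnegative B\'ezout coefficients simply giving $1\in U$ --- and then you invoke \cref{prop:nik_large} for the cofinite inclusion $T\subseteq d\mathbb{N}\cong\mathbb{N}$. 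This has two attractive features: it delivers finite generation and finite presentability in a single stroke, and it uses only machinery the paper already deploys (\cref{prop:nik_large} is exactly the tool used in \cref{la:subP} and \cref{prop:backward}), avoiding any appeal to R\'edei's theorem, which is a substantially deeper result. What the paper's citation route buys in exchange is brevity and reliance on standard references rather than on verifying the cofiniteness argument. One cosmetic caveat: your proof only treats nonempty subsemigroups (so that $\gcd(T)$ is defined), which is the standard convention and certainly the one the paper intends.
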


To prove \cref{prop:backward} we first consider a finitely generated subsemigroup $S$ of $FI_1$ wholly consisting of positive elements, i.e.
\[
S\leq P:= \bigl\{(-a, p, b)\in FI_1 \colon p>0 \bigr\} \: (\leq FI_1).
\]
For such an $S$ we will establish a decomposition into a disjoint union of subsemigroups of $\mathbb{N}$, in such a way that Propositions \ref{prop:union} and \ref{prop:sub_N} can be applied. Finally an appeal to \cref{prop:nik_large} and \cref{lem:useful} will enable us to extend to an arbitrary $S$.

We begin to implement this programme. The first two lemmas discuss a decomposition of~$P$ into infinitely many copies of $\mathbb{N}$.

\begin{lemma}
\label{la:decompose}
For $x \in \mathbb{N}_0$ and $y\in \mathbb{N}$ define
\[
N_{x,y} :=  \bigl\{(-x, p, p+(y-1)) \colon p\in \mathbb{N}\bigr \}.
\]
\begin{enumerate}[leftmargin=9mm,itemsep=1mm,label=\textup{(\roman*)}]
\item\label{it:dec1}
Each $N_{x,y}$ is a subsemigroup of $P$ isomorphic to $\mathbb{N}$.
\item\label{it:dec2}
The subsemigroups $N_{x,y}$ are pairwise disjoint.
\item\label{it:dec3}
$P=\bigcup\bigl\{ N_{x,y}\colon x\in\mathbb{N}_0,\ y\in\mathbb{N}\bigr\}$.
\end{enumerate}
\end{lemma}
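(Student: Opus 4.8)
The plan is to treat the three parts in turn, all of which reduce to direct computation with the multiplication formula for $FI_1$ together with the membership conditions $a,b\ge 0$, $a+b>0$, $-a\le p\le b$.

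For part~\ref{it:dec1} I would first record that $N_{x,y}\subseteq P$: an element $(-x,p,p+(y-1))$ with $p\in\mathbb{N}$ has sign $p\ge 1>0$, and one checks the defining inequalities of $FI_1$ hold (in particular $-x\le p$ since $p>0$, and $p\le p+(y-1)$ since $y\ge 1$). The heart of the argument is to multiply two elements $s_1=(-x,p,p+(y-1))$ and $s_2=(-x,q,q+(y-1))$ of $N_{x,y}$. Substituting into the product formula, the first coordinate is $-\max(x,x-p)$, which equals $-x$ because $p>0$; the second is $p+q$; and the third is $\max(p+(y-1),\,p+q+(y-1))$, which equals $p+q+(y-1)$ because $q>0$. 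Thus the product is $(-x,p+q,(p+q)+(y-1))\in N_{x,y}$, so $N_{x,y}$ is a subsemigroup, and the bijection $(-x,p,p+(y-1))\mapsto p$ carries this product to $p+q$, giving an isomorphism onto $(\mathbb{N},+)$.

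For part~\ref{it:dec2} I would suppose an element lies in both $N_{x_1,y_1}$ and $N_{x_2,y_2}$ and compare coordinates: equality of the first coordinates forces $x_1=x_2$, equality of the second coordinates forces the two $p$-parameters to coincide, and then equality of the third coordinates forces $y_1=y_2$. Hence distinct index pairs give disjoint sets. For part~\ref{it:dec3} the inclusion $\bigcup N_{x,y}\subseteq P$ is already noted; conversely, given any positive element $(-a,p,b)\in P$ I would set $x:=a\in\mathbb{N}_0$ and $y:=b-p+1$, observing that $p\in\mathbb{N}$ since $p>0$ and that $y\ge 1$ because the membership condition $p\le b$ gives $b-p\ge 0$; then $(-a,p,b)=(-x,p,p+(y-1))\in N_{x,y}$.

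None of the three parts presents a genuine obstacle; the only point demanding care is the evaluation of the two $\max$ terms in part~\ref{it:dec1}, where it is essential to use the positivity $p,q>0$ so that the left argument dominates in the first coordinate while the right argument dominates in the third. Everything else is bookkeeping with the coordinates.
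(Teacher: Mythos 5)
Your proof is correct and takes essentially the same approach as the paper: a direct coordinate check using the multiplication formula of $FI_1$. The paper merely compresses part (i) into the single observation that $N_{x,y}$ is generated by $(-x,1,y)$ --- which is your closure computation specialised to powers of one element --- and declares (ii) and (iii) obvious, exactly the bookkeeping you carry out explicitly.
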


\begin{proof}
\ref{it:dec1}
follows by observing that $N_{x,y}$ is generated by $(-x,1,y)$, while 
\ref{it:dec2}
and \ref{it:dec3}
are obvious.
\end{proof}

Note that the last component of an element of $N_{x,y}$ is the sum of the middle component and $y-1$. To emphasise this we will sometimes write $(-x, p, p+(y-p))$ for $(-x, p, y)$ in what follows.

\begin{lemma}
\label{lem:chain}
For $n\in \mathbb{N}_0$ define
\[
T_n:=\bigcup\bigl\{N_{x,y} \colon 0 \leq x \leq n,\, 1\leq y \leq n+1\bigr\}.
\]
\begin{enumerate}[leftmargin=9mm,itemsep=1mm,label=\textup{(\roman*)}]
\item\label{it:cha1}
Each $T_n$ is a subsemigroup of $P$.
\item\label{it:cha2}
$T_0\leq T_1\leq T_2\leq\dots$. 
\item\label{it:cha3}
$P=\bigcup_{n\in\mathbb{N}_0} T_n$.
\end{enumerate}
\end{lemma}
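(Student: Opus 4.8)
The plan is to treat parts \ref{it:cha2} and \ref{it:cha3} as immediate bookkeeping and to concentrate the actual work on the closure statement \ref{it:cha1}.

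For \ref{it:cha2}, I would simply observe that the index set $\{(x,y):0\le x\le n,\ 1\le y\le n+1\}$ defining $T_n$ is contained in the one defining $T_{n+1}$, so $T_n\subseteq T_{n+1}$ as sets; granting \ref{it:cha1}, both are subsemigroups and hence $T_n\le T_{n+1}$. For \ref{it:cha3}, each $T_n$ is a union of sets $N_{x,y}\subseteq P$, so $\bigcup_n T_n\subseteq P$; conversely, by \cref{la:decompose}\ref{it:dec3} every element of $P$ lies in some $N_{x,y}$, and that $N_{x,y}$ is contained in $T_n$ as soon as $n\ge\max(x,\,y-1)$, which gives $P\subseteq\bigcup_n T_n$.

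The substance is \ref{it:cha1}. I would take arbitrary $s_1=(-x_1,p_1,p_1+(y_1-1))\in N_{x_1,y_1}$ and $s_2=(-x_2,p_2,p_2+(y_2-1))\in N_{x_2,y_2}$ with $0\le x_i\le n$ and $1\le y_i\le n+1$, and compute $s_1s_2$ directly from the multiplication formula. The first coordinate is $-\max(x_1,x_2-p_1)$; since $p_1\ge 1$ forces $x_2-p_1\le n-1$, while $x_1\le n$, this lies between $0$ and $n$. The middle coordinate is $p_1+p_2\in\mathbb{N}$. The key computation is the last coordinate, $\max(p_1+y_1-1,\ p_1+p_2+y_2-1)$: subtracting the new middle value $p_1+p_2$ shows that $s_1s_2\in N_{x',y'}$ with $y'=\max(y_1-p_2,\ y_2)$. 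Then $y_2\ge 1$ gives $y'\ge 1$, and $p_2\ge 1$ gives $y_1-p_2\le y_1-1\le n$, so together with $y_2\le n+1$ we obtain $y'\le n+1$. Hence the product lies in some $N_{x',y'}$ with $0\le x'\le n$ and $1\le y'\le n+1$, i.e.\ in $T_n$, as required.

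I do not expect a genuine obstacle here; the only point requiring care is keeping track of the reindexing of the last coordinate so that the bound $y'\le n+1$ falls out cleanly. This is precisely where the choice of the range $1\le y\le n+1$ (rather than $1\le y\le n$) is used, since the $p_1$-shift in the product can increase the first coordinate up to $n$ and simultaneously push $y'$ up to $n+1$.
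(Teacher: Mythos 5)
Your proof is correct and takes essentially the same route as the paper: both verify closure in \ref{it:cha1} by computing the product via the multiplication formula and checking the coordinates stay in range (the paper phrases membership in $T_n$ as $p>0$ and $a,\,b-p\in[0,n]$, which is exactly your index bookkeeping $x'\le n$, $y'-1\le n$), and both dispose of \ref{it:cha2} and \ref{it:cha3} as immediate. One small caveat: your closing remark that the range $1\le y\le n+1$ is essential is not quite right---the same computation shows the union over $1\le y\le n$ would also be closed under multiplication, since $y'=\max(y_1-p_2,y_2)\le\max(n-1,n)$---but this does not affect the validity of your argument.
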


\begin{proof}
\ref{it:cha1}
Note that 
\[
(-a, p, b)  \in T_n\quad \Leftrightarrow\quad p>0\text{ and } a,b-p \in [0,n].
\]
Suppose
$(-a, p, b), (-c, q, d) \in T_n$. Then
\[
(-a, p, b)(-c, q, d)
=(-\max(a, c-p), p+q, \max(b, d+p)).
\]
Since $a,c\in [0,n]$ it follows that $\max(a, c-p) \in [0,n]$ as well.
Next, from $b-p,d-q\in [0,n]$ it follows that $b-(p+q)\leq n$ and
 $(d+p)-(p+q)=d-q\in [0,n]$. Hence $\max(b,d+p)-(p+q)\in [0,n]$.
Since  $p+q>0$, it follows that $(-a, p, b)(-c, q, d)\in T_n$, and therefore $T_n\leq P$.

\ref{it:cha2} is obvious, and \ref{it:cha3} follows from \cref{la:decompose} \ref{it:dec3}.
\end{proof}

We now turn to investigate a finitely generated subsemigroup of $P$. The first result follows easily from the previous lemma:

\begin{lemma}
\label{la:fini} 
A finitely generated subsemigroup $S$ of $P$ intersects only finitely many $N_{x,y}$.
\end{lemma}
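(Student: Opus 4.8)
The plan is to show that any finitely generated subsemigroup $S \leq P$ lives inside some $T_n$, from which the claim follows immediately by \cref{lem:chain}. The key observation is the characterisation of $T_n$ membership established in the proof of \cref{lem:chain}\ref{it:cha1}: an element $(-a, p, b) \in P$ lies in $T_n$ precisely when $a, b - p \in [0, n]$. So the entire question reduces to bounding, uniformly over all elements of $S$, the first coordinate $a$ and the quantity $b - p$ (the latter being $y - 1$ in the notation of \cref{la:decompose}, i.e.\ the parameter determining which $N_{x,y}$ the element sits in).

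First I would let $A = \{(-a_i, p_i, b_i) : i \in [k]\}$ be a finite generating set for $S$, and set $n := \max_i \max(a_i, b_i - p_i)$, so that $A \subseteq T_n$. Since \cref{lem:chain}\ref{it:cha1} tells us $T_n$ is a subsemigroup of $P$, and $T_n$ contains the generating set $A$, we immediately get $S = \langle A \rangle \leq T_n$. The heart of the argument is therefore already packaged inside \cref{lem:chain}: closure of $T_n$ under multiplication guarantees that products of generators cannot escape the bounds on $a$ and $b - p$, even though the middle coordinate $p$ (and hence $b$) grows without bound.

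Finally, because $T_n = \bigcup \{N_{x,y} : 0 \leq x \leq n,\ 1 \leq y \leq n+1\}$ is a union of only finitely many (namely $(n+1)^2$) of the sets $N_{x,y}$, and the $N_{x,y}$ are pairwise disjoint by \cref{la:decompose}\ref{it:dec2}, the containment $S \subseteq T_n$ forces $S$ to meet only those finitely many $N_{x,y}$ with $x \leq n$ and $y \leq n+1$. Hence $S$ intersects only finitely many $N_{x,y}$, as required.

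I do not anticipate a genuine obstacle here: all the work has been done in the preceding two lemmas, and this result is essentially a bookkeeping corollary. The only point requiring mild care is to invoke the correct inequality characterising $T_n$ — one must remember that it is $b - p$ (equivalently $y - 1$), not $b$ itself, that is bounded, since the middle and last coordinates are unbounded on $S$ in general. Making sure the generators' parameters are correctly read off to define $n$ is the sole thing to check.
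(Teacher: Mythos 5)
Your proof is correct and follows essentially the same route as the paper: both arguments place the finitely many generators inside a single $T_n$ (you by computing $n$ explicitly from the generators' parameters, the paper by invoking \cref{lem:chain}\ref{it:cha2}, \ref{it:cha3}), then use the fact that $T_n$ is a subsemigroup to conclude $S \subseteq T_n$, which is a union of finitely many $N_{x,y}$.
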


\begin{proof}
As $S$ is finitely generated, there must exist $n\in\mathbb{N}_0$ such that $S\subseteq T_n$ because of \cref{lem:chain} \ref{it:cha2}, \ref{it:cha3}. But $T_n$ in turn is a union of finitely many $N_{x,y}$ by definition, and the result follows.
\end{proof}

As a key technical step, we will now show that a finitely generated subsemigroup $S$ of $P$ with an additional technical condition is finitely presented. 

\begin{lemma}
\label{lem:nice}
Let $S \leq P$ be generated by $A = \bigl\{(-a_1, p_1, b_1), \dots, (-a_n, p_n, b_n)\bigr\}$. Suppose that for all $i, j \in [n]$ we have 
\begin{equation} 
\label{eq:nice_cond}
a_i \geq a_j - p_i \;\;\text{ and }\;\; b_i \leq b_j + p_i.
\end{equation}
For each $(x,y)\in I:=\{a_1, \dots, a_n\}\times \{b_1-p_1, \dots, b_n-p_n\}$, define
\[
S_{x,y} := N_{x,y+1}\cap S.
\]
Then the following hold.
\begin{enumerate}[leftmargin=9mm,itemsep=1mm,label=\textup{(\roman*)}]
\item \label{nice_1a} The sets $S_{x,y}$, $(x,y)\in I$, partition $S$.
\item \label{nice_1b} The equivalence relation with the equivalence classes $S_{x,y}$, $(x,y)\in I$, is a congruence on $S$.
\item \label{nice_2} $S$ is finitely presented.
\end{enumerate}
\end{lemma}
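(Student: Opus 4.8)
The plan is to prove a single \emph{normal form} for the elements of $S$, from which \ref{nice_1a}, \ref{nice_1b} and \ref{nice_2} all drop out. The claim is that condition~\eqref{eq:nice_cond} forces every product of generators to collapse to a rigid shape: for any $i_1,\dots,i_k\in[n]$,
\[
(-a_{i_1},p_{i_1},b_{i_1})\cdots(-a_{i_k},p_{i_k},b_{i_k})
=\Bigl(-a_{i_1},\ \sum_{l=1}^{k}p_{i_l},\ b_{i_k}+\sum_{l=1}^{k-1}p_{i_l}\Bigr).
\]
I would prove this by induction on $k$, inspecting what happens when the partial product $(-a_{i_1},P,B)$ is multiplied by the next generator: the two maxima in the multiplication rule of $FI_1$ both resolve to their ``expected'' values precisely because of~\eqref{eq:nice_cond}. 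The first coordinate stays $-a_{i_1}$ because $a_{i_1}\geq a_{i_k}-p_{i_1}\geq a_{i_k}-P$, and the last coordinate becomes $b_{i_k}+P$ because $b_{i_{k-1}}\leq b_{i_k}+p_{i_{k-1}}$. Two features of this normal form will be used repeatedly: an element $s=(-a,p,b)\in S$ has first coordinate $-a$ equal to $-a_{i_1}$ (so $a\in\{a_1,\dots,a_n\}$), while its quantity $b-p$ equals $b_{i_k}-p_{i_k}$ (so $b-p\in\{b_1-p_1,\dots,b_n-p_n\}$).

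Parts \ref{nice_1a} and \ref{nice_1b} then follow quickly. Membership of $(-a,p,b)$ in $S_{x,y}=N_{x,y+1}\cap S$ amounts to $a=x$ and $b-p=y$; since the pair $(a,b-p)$ lies in $I$ for every $s\in S$ and the $N_{x,y}$ are pairwise disjoint (\cref{la:decompose} \ref{it:dec2}), the sets $S_{x,y}$ partition $S$, giving \ref{nice_1a}. For \ref{nice_1b} I would consider the map $\chi\colon S\to I$, $(-a,p,b)\mapsto(a,b-p)$, where $I$ is endowed with the rectangular-band multiplication $(x,y)(x',y')=(x,y')$. Applying the normal form to the concatenated factorisation of a product $st$ shows that $st$ has the same first coordinate as $s$ and the same value of $b-p$ as $t$; hence $\chi$ is a homomorphism, $\ker\chi$ is a congruence, and its classes are exactly the non-empty sets $S_{x,y}=\chi^{-1}(x,y)$.

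For \ref{nice_2} I would invoke \cref{prop:union} for the congruence $\ker\chi$. Its index is at most $|I|\leq n^2$, hence finite. Each class $S_{x,y}$ is a subsemigroup because $(x,y)$ is idempotent in the rectangular band $I$, so $S_{x,y}S_{x,y}\subseteq S_{x,y}$. Finally, $S_{x,y}=N_{x,y+1}\cap S$ is a subsemigroup of $N_{x,y+1}\cong\mathbb{N}$ (\cref{la:decompose} \ref{it:dec1}) and is therefore finitely presented by \cref{prop:sub_N}. With all hypotheses of \cref{prop:union} met, $S$ is finitely presented.

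The whole argument rests on the normal form, and that is the only place where~\eqref{eq:nice_cond} is genuinely used; the rest is bookkeeping around the rectangular-band structure on $I$. The one point needing care is the inductive step for the last coordinate, where one must check that the freshly introduced term $b_{i_k}+\sum_{l=1}^{k-1}p_{i_l}$ dominates the accumulated term $b_{i_{k-1}}+\sum_{l=1}^{k-2}p_{i_l}$; this is exactly the second inequality of~\eqref{eq:nice_cond} applied to the indices $i_{k-1}$ and $i_k$.
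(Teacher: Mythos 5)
Your proposal is correct and follows essentially the same route as the paper's proof: your ``normal form'' is precisely the paper's inductive Claim (proved there the same way, with \eqref{eq:nice_cond} invoked at exactly the same two points of the induction step), and parts \ref{nice_1a}--\ref{nice_2} are then derived as in the paper; your rectangular-band map $\chi$ is just a repackaging of the paper's observation that $S_{x,y}S_{z,t}\subseteq S_{x,t}$.

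The one thing you leave unfinished is the non-emptiness of each $S_{x,y}$, $(x,y)\in I$. You prove that the sets are pairwise disjoint and cover $S$, but for them to \emph{partition} $S$ indexed by $I$ as stated in \ref{nice_1a} (and for the classes of $\ker\chi$ to be ``exactly'' the sets $S_{x,y}$ rather than only the non-empty ones, as you hedge in your treatment of \ref{nice_1b}), every $S_{x,y}$ must actually be inhabited. This follows at once from the normal form you already have: for any $(a_i,b_j-p_j)\in I$, the length-two product $(-a_i,p_i,b_i)(-a_j,p_j,b_j)=\bigl(-a_i,\,p_i+p_j,\,p_i+p_j+(b_j-p_j)\bigr)$ lies in $S_{a_i,\,b_j-p_j}$; the paper makes exactly this remark. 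Your argument for \ref{nice_2} is unaffected by this omission, since \cref{prop:union} only ever sees the genuine (non-empty) congruence classes.
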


\begin{proof}
We first prove the following:

\begin{namedclaim}[Claim]
\label{cl:prods}
We have 
\[
(-c_1, q_1, d_1)\dots(-c_m, q_m, d_m) = \bigl(-c_1, q_1+\dots+q_m, q_1+\dots+q_m+(d_m-q_m)\bigr)\in S_{c_1,d_m-q_m},
\]
where each $(-c_i, q_i, d_i)\in A$.
\end{namedclaim}

\begin{proof}
We prove the claim by induction on $m$. The case $m=1$ is trivial.
Consider $m>1$, and inductively assume the claim is valid for $m-1$. 
Then
\begin{align*}
&(-c_1, q_1, d_1)\dots(-c_{m}, q_{m}, d_{m})\\
=&\bigl((-c_1, q_1+\dots+q_{m-1}, q_1+\dots+q_{m-1}+(d_{m-1}-q_{m-1})\bigr)(-c_{m}, q_{m}, d_{m})=(-c, q, d),
\end{align*}
where
\begin{align*}
c&=\max\bigl(c_1, c_{m}-(q_1+\dots+q_{m-1})\bigr),\\
 q&=q_1+\dots+ q_m,\\
d&=\max\bigl(q_1+\dots+q_{m-1}+(d_{m-1}-q_{m-1}), q_1+\dots+q_{m-1}+d_{m}\bigr).
\end{align*}
Using (\ref{eq:nice_cond}) we have
\[
 c_1 \geq c_{m}-q_1 \geq c_{m}-(q_1+\dots+q_m),
 \]
  and so $c=c_1$.
Similarly, 
\[
q_1+\dots +q_{m-1}+(d_{m-1}-q_{m-1})\leq 
q_1+\dots +q_{m-1}+d_m=q_1+\dots+q_m+(d_m-q_m),
\]
implying $d=q_1+\dots+q_m+(d_m-q_m)$. This completes the inductive step and proof of the claim.
\end{proof}
\ref{nice_1a}
First note that $S= \bigcup_{(x,y)\in I}S_{x,y}$: indeed, ($\subseteq$) follows from \nameref{cl:prods}, whereas ($\supseteq$) holds by the definition of the $S_{x,y}$.
That each $S_{x,y}$ is non-empty also follows from \nameref{cl:prods}: indeed, products of elements of $A$ of length $2$ already yield members in each $S_{x,y}$. Finally, that the $S_{x,y}$ are mutually disjoint follows from 
\cref{la:decompose} \ref{it:dec2}.

\ref{nice_1b}
\nameref{cl:prods} gives that $S_{x,y}S_{z,t}\subseteq S_{x,t}$ for all $(x,y),(z,t)\in I$, which readily implies this part.

\ref{nice_2} Each $S_{x,y}$ is a subsemigroup of $N_{x,y+1}$, and $N_{x,y+1}\cong \mathbb{N}$ by 
\cref{la:decompose}~\ref{it:dec1}.
Hence each $S_{x,y}$ is finitely presented by \cref{prop:sub_N}.
It now follows that $S$ is finitely presented by combining \ref{nice_1a}, \ref{nice_1b} and \cref{prop:union}.
\end{proof}

Next we extend to an arbitrary finitely generated subsemigroup of $P$.

\begin{lemma}
\label{la:subP}
Any finitely generated subsemigroup $S$ of $P$ is finitely presented.
\end{lemma}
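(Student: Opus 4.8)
The plan is to reduce to the situation already settled in \cref{lem:nice} by cutting off a cofinite ideal of $S$ and then invoking \cref{prop:nik_large}. The guiding observation is that the hypothesis \eqref{eq:nice_cond} is forced on \emph{any} generating set whose elements all have sufficiently large middle component, so the strategy is to discard the finitely many elements of $S$ whose middle component is small and work with what remains.

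Concretely, fix the finite generating set $A=\{u_1,\dots,u_n\}$ of $S$, write $u_i=(-a_i,p_i,b_i)$, and put $M:=\max_i a_i+\max_i(b_i-p_i)$, a bound exceeding every $a$-value and every $(b-p)$-value occurring in $S$. I would then set
\[
I:=\bigl\{(-a,p,b)\in S : p> M\bigr\}.
\]
First I would verify that $I$ is a (two-sided) ideal of $S$: since every element of $P$ has strictly positive middle component, multiplying any $w\in I$ by an element of $S$ only increases the middle component, so such products stay in $I$. Next, $S\setminus I=\{(-a,p,b)\in S:p\le M\}$ is finite: by \cref{la:fini}, $S$ meets only finitely many $N_{x,y}$, and each $N_{x,y}$ contains at most the $M$ elements with middle component in $[1,M]$. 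By \cref{prop:nik_large} it then suffices to prove that $I$ is finitely presented, and the same proposition (applied to finite generation) guarantees that $I$ is finitely generated because $S$ is.

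The heart of the argument is that every finite generating set of $I$ already satisfies \eqref{eq:nice_cond}. Indeed, let $\{v_1,\dots,v_m\}\subseteq I$ generate $I$ and write $v_k=(-c_k,q_k,d_k)$. Each $v_k\in I$ gives $q_k> M$, while $c_k$ and $d_k-q_k$ are among the finitely many (by \cref{la:fini}) $a$- and $(b-p)$-values of $S$, so $c_k\le M$ and $d_k-q_k\le M$. Hence, for all $k,\ell$, we get $c_k+q_k>M\ge c_\ell$, that is $c_k\ge c_\ell-q_k$, and $d_k-q_k\le M< q_\ell\le d_\ell$, that is $d_k\le d_\ell+q_k$, which is precisely \eqref{eq:nice_cond}. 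Therefore \cref{lem:nice} applies to $I$ and shows $I$ is finitely presented, and \cref{prop:nik_large} then yields that $S$ is finitely presented.

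The only genuine subtlety, and the main thing to get right, is the \emph{choice} of the subsemigroup to peel off. The tempting shortcut of replacing the $u_i$ by high powers $u_i^k$ does force \eqref{eq:nice_cond}, but it produces a subsemigroup $\langle u_1^k,\dots,u_n^k\rangle$ whose complement in $S$ is typically infinite (already for $S\cong\mathbb{N}$), so \cref{prop:nik_large} cannot be applied to it. Cutting instead along the middle component, via the ideal $I$, is what simultaneously secures cofiniteness of $S\setminus I$ and the inequalities \eqref{eq:nice_cond}; checking these two facts, together with the boundedness of the $a$- and $(b-p)$-values supplied by \cref{la:fini}, is the whole content of the proof.
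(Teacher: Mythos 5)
Your proposal is correct and takes essentially the same route as the paper: your ideal $I$ is exactly the paper's subsemigroup $U$ (the elements of $S$ whose middle component exceeds a constant dominating every $a$-value and every $(b-p)$-value occurring in $S$), and your verification of \eqref{eq:nice_cond} for a finite generating set of $I$ is the paper's computation verbatim. The only differences are minor: the paper obtains finite generation of $U$ by decomposing it into subsemigroups of the $N_{x_i,y_i}\cong\mathbb{N}$ and citing \cref{prop:sub_N}, whereas you cite \cref{prop:nik_large}; and your assertion that $M=\max_i a_i+\max_i(b_i-p_i)$ bounds all $a$- and $(b-p)$-values of $S$ (true, since multiplication in $P$ never increases either quantity) is most cleanly justified by instead taking $M$ to be the maximum of the finitely many such values supplied by \cref{la:fini}, which is precisely the paper's constant $Q$.
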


\begin{proof}
By \cref{la:fini} there are only finitely many $N_{x,y}$ which have non-empty intersection with $S$; suppose they are $N_{x_1,y_1}, \dots , N_{x_m, y_m}$. For each $i = 1, \dots, m$, let 
\[
q_i := \max(x_i, y_i-1),\quad Q := \max_{1\leq i \leq m}q_i,\quad
U_i := \bigl\{(-a, p, b)\in N_{x_i,y_i} : p > Q\bigr\} \cap S.
\]
Since the multiplication in $P$ always increases the middle component, it follows that $U_i \leq S \cap N_{x_i,y_i}$ and that $U := \bigcup_{i=1}^{m}U_i\leq S$. Since $U_i \leq N_{x_i,y_i} \cong \mathbb{N}$, it follows that each $U_i$ is finitely generated by \cref{prop:sub_N}. Hence, $U$ is also finitely generated and we let $A = \{(-a_1, p_1, b_1), \dots, (-a_n, p_n, b_n)\}\subseteq P$ be its generating set.

We claim that arbitrary $(-a_i, p_i, b_i), (-a_j, p_j, b_j) \in A$ satisfy the condition (\ref{eq:nice_cond}) in \cref{lem:nice}. We have $p_i, p_j > Q$, $a_i, a_j \in \{x_1, \dots, x_m\}$ and $b_i-p_i, b_j-p_j \in \{y_1-1, \dots , y_m-1\}$. By our choice of $Q$, we have $a_j \leq Q < p_i$ which implies $a_j - p_i < 0 \leq a_i$. Also, $b_i - p_i \leq Q < p_j \leq b_j$ which implies $b_i \leq b_j + p_i$, and (\ref{eq:nice_cond}) indeed holds. 

\cref{lem:nice} \ref{nice_2} now implies that $U$ is finitely presented. Since $S\setminus U$ is finite, $S$ is also finitely presented by \cref{prop:nik_large}.
\end{proof}

%\Ccomm{The generating set for $U$ say $A$. And list it as $A = \{(-a_1, p_1, b_1),...$.}

%\Ccomm{The statement: Any finitely generated subsemigroup $S$ of $P$ is finitely presented.}

%\Ccomm{So after changing the statement, no first sentence is needed. By Lemma 4.6 there are only..}

Finally, we can prove the main result of this section.

\begin{proof}[Proof of \cref{prop:backward}]
Let $S$ be a finitely generated subsemigroup of $FI_1$ with only finitely many idempotents.
By \cref{lem:useful}, $S$ consists entirely of non-negative  or   non-positive elements.
Without loss of generality suppose the former is the case.
Notice that $S\cap P$ is an ideal of $S$, and that $S\setminus P$ is the set of idempotents of $S$.
Hence $S\setminus (S\cap P)$ is finite.
By \cref{prop:nik_large}, $S\cap P$ is finitely generated. Then it follows that it is finitely presented by \cref{la:subP}. And finally another application of \cref{prop:nik_large} gives that $S$ itself is finitely presented, completing the proof.
\end{proof}

The \nameref{cor:inv_fp} can be quickly derived from the \nameref{thm:main}:

\begin{proof}[Proof of the \nameref{cor:inv_fp}]
If $S$ contains a non-idempotent element $a$, it also contains infinitely many idempotents $a^{-n}a^n$, $n\in\mathbb{N}$, so $S$ cannot be finitely presented by our \nameref{thm:main}.
\end{proof}

We conclude the paper by recalling the results from \cite{CS:Rcf01} and \cite{GS:Fim21}, and ask whether they extend to subsemigroups of the monogenic free inverse semigroup, in the sense that our \nameref{thm:main} extends the original theorem by Schein (Theorem \ref{thm:Schein}). Specifically, we ask whether it is true that each of the following two properties concerning a finitely generated subsemigroup $S$ of $FI_1$ is equivalent to $S$ having only finitely many idempotents: (a) having a regular language of normal forms; and (b) the property $\textup{FP}_2$?
\vspace{2mm}

\textbf{Competing interest declaration.}
The authors have no competing interest to declare.

%%%%%%%%%%%%%%%%%%%%%%%%%%%%%%%%%%%%%%%%%%%%%%%%%%%%%%%%%%%%%%%%%%%%%%%%%%%%%%%%%%%%%%%%%%%%%%%%%%%%%%%%%%%%

%%%%%%%%%%%%%%%%%%%%%%%%%%%%%%%%%%%%%%%%%%%%%%%%%%%%%%%%%%%%%%%%%%%%%%%%%%%%%%%%%%%%%%%%%%%%%%%%%%%%%%%%%%%%%%%%%%%%%%%%%%%%%%%%%%%%%%%%%%%%%%%%%%%%%%%%
%%%REFERENCES%%%
\bibliographystyle{plainnat}

\end{document}